\documentclass[amsthm]{elsarticle}
\usepackage{lipsum}
\makeatletter
\def\ps@pprintTitle{%
 \let\@oddhead\@empty
 \let\@evenhead\@empty
 \def\@oddfoot{}%
 \let\@evenfoot\@oddfoot}
\makeatother

\usepackage{multirow} 
\usepackage{enumitem} 
\usepackage{xspace} 

\usepackage{graphicx}
\usepackage{amsmath,amssymb,amsthm}
\usepackage{bm}
\usepackage{pdfsync}
\usepackage{subfigure}
\usepackage[dvipsnames]{xcolor}
\usepackage[english]{babel}
\usepackage{pictexwd}

\theoremstyle{plain}
\newtheorem{theorem}{Theorem}[section]
\newtheorem{proposition}[theorem]{Proposition}
\newtheorem{lemma}[theorem]{Lemma}
\newtheorem{corollary}[theorem]{Corollary}

\theoremstyle{definition}
\newtheorem{example}[theorem]{Example}
\newtheorem{definition}[theorem]{Definition}
\newtheorem{remark}[theorem]{Remark}

\def\tprod{\mathop{\textstyle\prod}\limits}
\def\LT{\mathop{\rm LT}\nolimits}
\def\m{{\mathfrak{m}}}

\def\q{{\mathfrak{q}}}

\def\Stair{{\rm Stair}}

\def\soc{{\rm soc}}
\def\Ann{{\rm Ann}}

\def\maxgrid{{\rm mgrid}}
\def\Eqq#1{\mathop{=\joinrel=}\limits^{{#1}}}
\def\card{{\rm card}}

\def\cocoa{\mbox{\rm
  C\kern-.13em o\kern-.07 em C\kern-.13em o\kern-.15em A}}

\def\GFN{\mathop{\rm GFNum}\nolimits}
\def\Supp{\mathop{\rm Supp}\nolimits}

\def\gb{Gr\"obner basis }

\usepackage{color}
\definecolor{OliveGreen}{cmyk}{0.64,0,0.95,0.40}
\definecolor{Purple}{cmyk}{0.45,0.86,0,0}

\begin{document}

\begin{frontmatter}

\title{Small Gr\"obner Fans of Ideals of Points}

\author{Elena Dimitrova\fnref{myfootnote}}
\address{\scriptsize Department of Mathematical Sciences, Clemson University, Clemson, SC 29634, USA }
\ead{edimit@clemson.edu}

\author{Qijun He\fnref{myfootnote}}
\address{Biocomplexity Institute and Initiative, Charlottesville, VA 22911, USA}
\ead{qh4nj@virginia.edu}

\author{Lorenzo Robbiano}
\address{\scriptsize Dip. di Matematica,
\ Universit\`a degli Studi di Genova, \ Via
Dodecaneso 35,\
I-16146\ Genova, Italy}
\ead{robbiano@dima.unige.it}

\author{Brandilyn Stigler\fnref{myfootnote}}
\address{\scriptsize Department of Mathematics, Southern Methodist University, Dallas, TX 75275, USA }
\ead{bstigler@smu.edu}

\fntext[myfootnote]{Partially supported by NSF grant DMS-1419023.}

\bigskip
\begin{abstract}
In the context of modeling biological systems, it is of interest to generate
ideals of points with a unique reduced Gr\"obner basis, and
the first main goal of this paper is to identify classes of ideals
in polynomial rings which share this property. Moreover, we provide  
methodologies for constructing such ideals.
We~then relax the condition of uniqueness. The second and most relevant
topic discussed here is to consider and identify pairs of ideals with the
same number of reduced Gr\"obner bases, that is, with the same
cardinality of their associated Gr\"obner~fan.
\end{abstract}

\date{\today}

 \begin{keyword}
Ideal of points, basic set, staircases, Gr\"obner fan, distraction,  complementary ideals.
\MSC{2010
{13P10, 13P25, 13-04, 68W30, 92B05}
}
\end{keyword}

\end{frontmatter}


\section{\large Introduction}
Gr\"obner bases have enjoyed a diverse set of applications since their inception in 1965 (for example, see \cite{lin2004, maniatis2007, torrente2009, tsai2016}).  In 2004, Gr\"obner bases were applied to the problem of model selection in systems biology \cite{laubenbacher2004computational}.  Specifically, they were introduced as a tool to select minimal models from a set of polynomial dynamical systems (PDS) that fit discretized experimental data: for a given set of data points over a finite field, the ideal of points forms a coset representing the space of PDSs that fit the data and a minimal model is selected from the space by computing a reduced Gr\"obner basis of the ideal and taking the normal forms of the model equations. While this provides an algorithmic solution to model selection, each choice of monomial order results in a different minimal PDS, with each one yielding different hypotheses about the underlying biological network.  The following example illustrates this claim.

\smallskip
Lactose metabolism in \textit{E.coli} is controlled by the \textit{lac}
operon, a genetic system made up of simultaneously transcribed genes.
It is said that the \textit{lac} operon~($x$) is ON (lactose is metabolized)
when the activating protein CAP~($y$) is present
and when the inhibiting protein \textit{lacI}($z$) is absent.
This behavior can be described by the Boolean function $f=y\wedge \lnot z$;
as a polynomial over the finite field
$\mathbb F_2$, we can write $f=y(z+1)=yz+y$.
If we consider the inputs $\mathbb X=\{(1,0,0),(0,1,0),(1,0,1)\}$
representing Boolean states for the \textit{lac} operon, CAP, and \textit{lacI}
respectively, then the ideal of polynomials vanishing  on $\mathbb X$ has two Gr\"obner bases,
namely
$\{x^2+x, z^2+z, y+x+1, xz+z\}$ and
$\{y^2+y, z^2+z, x+y+1, yz\}$.
The normal forms of $f$ are $x+1$ and $y$ respectively.
Note that the function $f$ is selected as a model using the first \gb
while a different model is selected using the second Gr\"obner basis.

\smallskip
Computing all possible minimal PDSs requires computing the Gr\"obner fan of the ideal which is computationally expensive, even in the finite field case. The authors in \cite{dimitrova2014data} posed the question of finding data sets whose corresponding ideals have a small number, possibly a unique reduced Gr\"obner basis, or whose Gr\"obner fans consist of a single cone. Their motivation was a desire to minimize the number of associated models, each with a different set of predictions.    

Similar problems arise in the branch of statistics called combinatorial design of experiments
(see~\cite{R} and~\cite{KR2},Tutorial 92,  for an introduction to this topic).
In the context of a field $K$, functions which fit data in $\mathbb{X}\subseteq K^n$
lie in the coordinate ring $K[\mathbb{X}]:=K[x_1,\ldots ,x_n]/\mathcal I(\mathbb{X})$.
Then the coset $f+\mathcal I(\mathbb{X})$ describes the set of models
which fit the input data in~$\mathbb{X}$ and one model is chosen by computing
the normal form of $f\in K[x_1,\ldots ,x_n]$ with respect to a
Gr\"obner basis of the ideal of points $\mathcal I(\mathbb{X})$.
Changing term orderings results in potentially different
normal forms, \textit{i.e.} different models.

The first main goal of this paper is to identify classes of ideals
in polynomial rings which have a unique reduced Gr\"obner basis. 
In Section~\ref{Background} we introduce fundamental tools such as
G-basic sets, GFan numbers, and linear shifts
(see Definitions~\ref{basic-G-basic}, \ref{GBN(I)}, and~\ref{linearshift}).
Then it is shown in Theorem~\ref{linshifts}
that ideals related by a linear shift share the same number of G-basic sets, equivalently
the same GFan number. Finally the classical notion of an ideal of points is recalled
together with the notion of a grid of points.

Section~\ref{OneReducedGbasis} starts with Theorem~\ref{GFan=Basic} which provides a 
characterization of ideals whose GFan number is 1. Such ideals turn out to have also a unique
basic set as shown in Corollary~\ref{oneGB-oneB}. Then the important notion of a distraction
is recalled. It is shown that distractions and their linear shifts provide a large class of ideals
with GFan number equal to 1 (see Theorem~\ref{6.2.12} and Corollary~\ref{distrpoints}).
The last subsection of this section focuses on natural distractions and associated staircases.
Their strong connection is highlighted in Proposition~\ref{naturaldistraction}.

Section~\ref{Complementary Ideals} contains the most relevant results of the paper.
It is well-known that every zero-dimensional ideal in ${P = K[x_1, \dots, x_n]}$ 
contains $n$ univariate polynomials, one for each indeterminate. 
Accordingly, we consider an ideal $J$ in $P$ generated by $n$ univariate polynomials, 
one for each indeterminate, and  Definition~\ref{mainAssump2} describes 
how two ideals $I_1$ and $I_2$
which contain $J$ can be considered to be complementary with respect to $J$.
The main Theorem~\ref{sameGFan} shows that complementary ideals
have the same Gr\"obner fan, hence the same GFan number, and then Corollary~\ref{IandJ2}  provides good 
classes of complementary ideals. 
The paper is concluded in Section~\ref{Application} where some applications
of the theory developed before and some hints to future research are illustrated.

Basic definitions and results are taken from~\cite{KR1}, \cite{KR2}, and \cite{KR3}, with
examples computed in \cocoa-5\ \cite{CoCoA} to allow the interested reader to check the computations directly.

\medskip
\section{\large Background}
\label{Background}

Let $K$ be a field, $P = K[x_1, \dots, x_n]$ a polynomial ring,
and $I$ an ideal in~$P$. 
We recall that $\mathbb T^n$ is the monoid of power products in the
indeterminates $x_1, \dots, x_n$ and that a non-empty subset $\mathcal{O}$ of $\mathbb T^n$
is called an \textbf{order ideal}  if it is closed under division
(see~\cite{KR2}, Definition 6.4.3).
If  $\sigma$ is a term ordering,
the set $\mathbb T^n{\setminus}\LT_\sigma(I)$
is denoted by~$\mathcal{O}_\sigma(I)$.
It is well-known that $\mathcal{O}_\sigma(I)$ is an order ideal and
the  residue classes of its elements form a $K$-basis of $P/I$
(see for instance~\cite{KR1}, Corollary 2.4.11).
It is also well-known that, given $I$, there are order ideals which are not of
type $\mathcal{O}_\sigma(I)$; nevertheless the residue classes of
their elements form a $K$-basis of $P/I$.
The following example  taken from~\cite{KR2} (see Example 6.4.2)
is a case in point.

\begin{example}\label{6.4.2}
Consider the ideal $I \!= \langle x^2+xy+y^2,\, x^3,\, x^2y,\, xy^2,\,y^3\rangle$
in~$\mathbb Q[x, y]$. This ideal is symmetric with respect to
switching~$x$ and~$y$. Since the leading term of~$x^2+xy+y^2$
is either~$x^2$ or~$y^2$, the ideal~$I$ has two possible
leading term ideals, namely the ideals $J_1 = \langle x^2,\, xy^2,\, y^3\rangle$
and $J_2 = \langle x^3,\, x^2y,\, y^2\rangle$. Neither is symmetric.
Thus they do not give rise to symmetric vector space bases
of~$\mathbb Q[x,y]/I$. However, the set of terms
$\mathcal{O}= \{1,\,x,\,y,\,x^2,\,y^2\}$ is symmetric and
represents a vector space basis of~$\mathbb Q[x, y]/I$.
\index{Gr\"obner basis!breaks symmetry}
\end{example}

These considerations motivate the following definition.

\begin{definition}\label{basic-G-basic}
An order ideal $\mathcal{O}$ such that the classes of its elements form
a $K$-basis of $P/I$ is called a \textbf{basic set} for $I$.
If there exists a term ordering $\sigma$ such
that $\mathcal{O}=\mathcal{O}_\sigma(I)$, it is called a
\textbf{G-basic set} for $I$. 
If we want to specify that a G-basic set is obtained using $\sigma$,
we call it a \textbf{$\sigma$-basic set}. 
\end{definition}

Let $I$ be a zero-dimensional ideal with $\dim_K(P/I)=s<\infty$,
let~$\sigma$ be a term ordering,
and let $\mathcal{O}$ be an order ideal with $s$ elements. 
The normal forms of the elements in $\mathcal{O}$ with respect to $\sigma$ are linear
combinations of the elements of $\mathcal{O}_\sigma(I)$, 
and hence can be represented by an $s\times s$ matrix, say~$M$. 
It is then clear that $\mathcal{O}$ is a basic set for $I$ if and only 
if~$M$ is invertible.


Some relations between basic sets and $\sigma$-basic sets are
described in~\cite{BOT}, Section 2.  
For zero-dimensional ideals, basic sets are the main building blocks
of the theory of border bases
(see~\cite{KR2}, Section 6.4 for the introduction to that theory)
which is outside the scope of the present paper.

\medskip

As mentioned above,  the authors in \cite{dimitrova2014data} and others raised the question
of  properties of $\mathbb{X}$ that
guarantee~$\mathcal{I}(\mathbb{X})$ has a unique reduced Gr\"obner basis, hence a unique 
G-basic set: such data sets have uniquely identifiable models. To count the number of 
G-basic sets of an ideal, we use the notion of
the \emph{Gr\"obner fan} which was introduced in~\cite{MR}. It is a subdivision of the closed non-negative orthant ~$\mathbb R^n_+$ made with a finite number
of polyhedral cones, such that
the cones are in  one-to-one correspondence to the
G-basic sets for~$I$.

\begin{definition}\label{GBN(I)}
Let $I$ be an ideal in $P$ and let ${\rm GFan}(I)$ be the Gr\"obner fan of~$I$.
The number of G-basic sets for $I$,
equivalently  the number of leading term ideals of $I$,
is called the \textbf{GFan number} of~$I$, and  is denoted by $\GFN(I)$,
since it coincides with the number of polyhedral cones in  ${\rm GFan}(I)$.
\end{definition}

We point out that the definition does not count the number of different 
reduced Gr\"obner bases, as shown with the help of the following easy examples.

\begin{example}\label{easyGFan}
Let $I = \langle f\rangle \subset K[x,y]$, where $f = x+y$ and $K$ is any field.
One can argue that for every term ordering $\{f\}$ is the reduced Gr\"obner basis.
However, for every term ordering $\sigma$ with $x>y$ we have $\LT(I) =\langle x\rangle$
and the corresponding G-basic set is $\{ y^n \mid n \in \mathbb N\}$.
For every term ordering $\sigma$ with $y>x$ we have $\LT(I) =\langle y\rangle$
and the corresponding G-basic set is $\{ x^n \mid n \in \mathbb N\}$.
Consequently we have   $\GFN(I) =2$. 
\end{example}

\begin{example}\label{countingGB}
Let $I = \langle x+y+z\rangle \subset K[x,y,z]$, where $K$ is any field.
In this case we have $\GFN(I) = 3$ since the only possible leading term ideals  of $I$
are $\langle x \rangle$, $\langle  y \rangle$, $\langle z \rangle$.
\end{example}

Since a topic of this paper  is to identify ideals which
have the same GFan number, we note that some affine 
transformations do not affect leading terms.  
This observation motivates the following definition.  

\begin{definition}\label{linearshift}
An affine transformation $\Phi: P\to P$
defined by $x_i \mapsto a_ix_i+b_i$ where 
$a_i\in K\setminus \{0\}$, $b_i\in K$ for $i =1, \dots, n$ is called a \textbf{linear shift} of~$P$.
\end{definition}

\begin{proposition}\label{linshifts}
Let $\Phi$ be a linear shift of $P$ and $I$ an ideal in $P$.
\begin{enumerate}
\item[(a)] The ideals $I$ and $\Phi(I)$ have the same G-basic sets.

\item[(b)] We have $\GFN(I) = \GFN(\Phi(I))$.
\end{enumerate}
\end{proposition}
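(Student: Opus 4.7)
The plan is to prove a single key lemma: for every term ordering $\sigma$ and every polynomial $f\in P$, we have $\LT_\sigma(\Phi(f))=\LT_\sigma(f)$. Once this is established, both (a) and (b) follow from essentially bookkeeping. First I would verify the claim on a single power product: writing $\Phi(x^\alpha)=\prod_{i=1}^n (a_i x_i+b_i)^{\alpha_i}$ and expanding, every monomial that appears is of the form $x^\beta$ with $\beta\le\alpha$ componentwise, and the unique ``top'' contribution $x^\alpha$ occurs with coefficient $\prod_i a_i^{\alpha_i}\ne 0$. Since a proper divisor of $x^\alpha$ is strictly smaller than $x^\alpha$ in any term ordering (multiply $1<_\sigma x^\gamma$ on both sides by $x^\beta$, where $x^\alpha=x^\beta x^\gamma$), we get $\LT_\sigma(\Phi(x^\alpha))=x^\alpha$.

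Next I would upgrade this to arbitrary $f=\sum_\alpha c_\alpha x^\alpha$ with $\LT_\sigma(f)=x^{\alpha^*}$. By $K$-linearity, $\Phi(f)=\sum_\alpha c_\alpha\Phi(x^\alpha)$. For any $\alpha$ in the support of $f$, the polynomial $\Phi(x^\alpha)$ is supported in terms $\le_\sigma x^\alpha\le_\sigma x^{\alpha^*}$, so no term strictly larger than $x^{\alpha^*}$ can appear in $\Phi(f)$. Moreover, the coefficient of $x^{\alpha^*}$ in $\Phi(f)$ receives a contribution only from $c_{\alpha^*}\Phi(x^{\alpha^*})$, yielding $c_{\alpha^*}\prod_i a_i^{\alpha^*_i}\ne 0$. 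Hence $\LT_\sigma(\Phi(f))=x^{\alpha^*}=\LT_\sigma(f)$.

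For part (a), $\Phi$ is a $K$-algebra automorphism of $P$ (its inverse is the linear shift with parameters $a_i^{-1}$ and $-a_i^{-1}b_i$), so $\Phi(I)$ is an ideal and $\{\Phi(f):f\in I\}=\Phi(I)$. Applying the lemma elementwise gives $\LT_\sigma(\Phi(I))=\LT_\sigma(I)$, and thus $\mathcal{O}_\sigma(\Phi(I))=\mathbb T^n\setminus\LT_\sigma(\Phi(I))=\mathcal{O}_\sigma(I)$ for every term ordering $\sigma$. Therefore the families of $\sigma$-basic sets of $I$ and $\Phi(I)$ coincide term-by-term, proving (a). Part (b) is immediate: $\GFN(I)$ is the cardinality of the set $\{\mathcal{O}_\sigma(I):\sigma\}$, and by (a) this set is the same for $I$ and $\Phi(I)$.

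I do not foresee a serious obstacle; the only point requiring a moment of care is confirming that the leading coefficient does not accidentally vanish after the substitution, which is handled by the observation that only the single summand $c_{\alpha^*}\Phi(x^{\alpha^*})$ can contribute to the coefficient of $x^{\alpha^*}$ in $\Phi(f)$.
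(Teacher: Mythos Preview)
Your proof is correct and follows essentially the same approach as the paper: both hinge on the observation that $\LT_\sigma(\Phi(f))=\LT_\sigma(f)$ for every $f$ and every term ordering $\sigma$, from which $\LT_\sigma(I)=\LT_\sigma(\Phi(I))$ and hence (a) and (b) follow. The paper's version is terser (it simply asserts $\LT_\sigma(f)=\LT_\sigma(\Phi(f))$ and then passes to the inverse shift for the reverse inclusion), whereas you supply the expansion argument and handle both inclusions at once via the bijection $I\to\Phi(I)$; but the underlying idea is the same.
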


\begin{proof}
To prove (a), let $\sigma$ be a term ordering on $\mathbb T^n$ and
let $f$ be a non-zero polynomial in~$I$. It is clear that  $\LT_\sigma(f) = \LT_\sigma(\Phi(f))$
which implies the inclusion  $\LT_\sigma(I) \subseteq \LT_\sigma(\Phi(I))$. But $\Phi$ is an isomorphism
and its inverse is also a linear shift, hence we get the other inclusion.
Consequently we have $\LT_\sigma(I) = \LT_\sigma(\Phi(I))$ for every term ordering~$\sigma$
which implies that $I$ and $\Phi(I)$ have the same G-basic sets

Claim (b) follows from (a), thereby completing the proof.
\end{proof}

\begin{example}
Let us return to the ideal $I$ in Example \ref{6.4.2}.  Consider the linear shift  $\Phi = (x + 1, y -2)$. Then 
$\Phi(I)= \langle (x+1)^2+(x+1)(y-2)+(y-2)^2,\, (x+1)^3,\, (x+1)^2(y-2),\, (x+1)(y-2)^2,\, (y-2)^3\rangle$.
Note that $\Phi(I)$ also has  two 
leading term ideals, namely the same minimally generated ideals $J_1 = \langle x^2,\, xy^2,\, y^3\rangle$ and $J_2 = \langle x^3,\, x^2y,\, y^2\rangle$ as $I$ above. Indeed, $\GFN(I)=\GFN(\Phi(I))=2$.

\end{example}


In areas such as design of experiments, ideals are constructed from data as was 
described in the introduction.  As such, we call a tuple $(c_1, \dots, c_n) \in K^n$  a \textbf{point}, 
corresponding to the linear maximal ideal $\m=\langle x_1-c_1, \dots, x_n-c_n\rangle \in P$.
Furthermore the \textbf{vanishing ideal} $\mathcal{I}(\mathbb Y)$ of a finite 
set $\mathbb Y$ of $s$ points is a zero-dimensional radical ideal in~$P$ of type
$\mathcal{I}(\mathbb Y) = \m_1\cap\cdots\cap \m_s$, and
which we also call an \textbf{ideal of points}.  For an introduction to ideals of points, 
see~\cite{KR2}, Section 6.3; for methods to efficiently compute them and other
zero-dimensional ideals, see~\cite{ABKR} and \cite{AKR}.

\medskip
\section{\large Ideals with One Reduced Gr\"obner Basis}
\label{OneReducedGbasis}

In this section, we look for conditions which guarantee that an ideal $I$ has $\GFN(I) =1$.  
We assume that $K$ is any a field and $P = K[x_1, \dots, x_n]$ is a polynomial ring. 
Where specific conditions for $K$ are required, we will note it as necessary.

\medskip
\subsection{General Results}

We start this subsection by recalling the notion of $\Supp(f)$ 
(see for instance~\cite{KR1} Definition 1.1.11).
Let $f \in P$ and let $f= \sum_{i=1}^r c_it_i$
where $c_i \in K$ and $t_i\in \mathbb T^n$. Then the support of $f$ is
defined as $\Supp(f) = \{t_i \mid c_i \ne 0\}$. Notice that $\Supp(0) = \emptyset$.

\begin{definition}\label{factorclosedpoly}
A polynomial $f \in P$ is called \textbf{factor-closed} if there exists $t \in \Supp(f)$
such that  all  $t' \in \Supp(f)$ have the property that $t'$ divides $t$.
\end{definition}

\begin{lemma}\label{minimalGbasis}
Let $I\subset P$ be an ideal. Let $\sigma$ be a term ordering and $G$ be a minimal monic $\sigma$-Gr\"obner basis of $I$. Assume that
every polynomial in $G$ is factor-closed.
\begin{enumerate}
\item[(a)] The set $G$ is the reduced $\sigma$-Gr\"obner basis of $I$.

\item[(b)]  We have $\GFN(I) = 1$.
\end{enumerate}
\end{lemma}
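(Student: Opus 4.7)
My plan is to exploit the observation that a factor-closed polynomial $g$ has a unique term $t_g\in\Supp(g)$ which is divisible by every other element of $\Supp(g)$, and that this term must be $\LT_\tau(g)$ for \emph{every} term ordering $\tau$ (since $t'\mid t_g$ implies $t'\le_\tau t_g$ by compatibility of $\tau$ with divisibility). In particular, every $g\in G$ has the same leading term regardless of the term ordering used.

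For part (a), I would fix $g\in G$ and consider a term $t'\in\Supp(g)$ with $t'\ne \LT_\sigma(g)=t_g$. Since $g$ is factor-closed, $t'\mid t_g$ properly. If some $\LT_\sigma(g')$ with $g'\in G\setminus\{g\}$ were to divide $t'$, then by transitivity $\LT_\sigma(g')$ would divide $\LT_\sigma(g)$, contradicting the minimality of $G$. Hence no non-leading monomial of any element of $G$ is divisible by the leading term of another element, which is exactly the definition of a reduced Gr\"obner basis (combined with the fact that $G$ is already monic and minimal by hypothesis).

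For part (b), I would let $\tau$ be an arbitrary term ordering and set $J=\LT_\sigma(I)=\langle t_g\mid g\in G\rangle$. By the observation above, $t_g=\LT_\tau(g)\in\LT_\tau(I)$ for each $g\in G$, so $J\subseteq\LT_\tau(I)$, equivalently $\mathcal{O}_\tau(I)=\mathbb T^n\setminus\LT_\tau(I)\subseteq \mathbb T^n\setminus J=\mathcal{O}_\sigma(I)$. Both of these order ideals give $K$-bases of $P/I$ by the standard theory recalled in the background (cf.~\cite{KR1}, Corollary~2.4.11). The hard step of the argument is to upgrade the inclusion of order ideals to equality; this follows from the elementary linear-algebra fact that a proper subset of a $K$-basis of a vector space cannot itself span that space. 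Hence $\mathcal{O}_\tau(I)=\mathcal{O}_\sigma(I)$, i.e., $\LT_\tau(I)=\LT_\sigma(I)$, for every $\tau$, and $\GFN(I)=1$ follows from Definition~\ref{GBN(I)}.

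The only subtlety I anticipate is justifying why the set equality of complements is genuinely forced rather than merely a cardinality coincidence, especially if one worries about the infinite-dimensional case; but since we are only comparing which monomials appear in the basis (not their number), the linear-independence argument applies uniformly without any finiteness assumption on $\dim_K(P/I)$.
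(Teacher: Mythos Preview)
Your proof is correct and follows essentially the same approach as the paper. For (a), your argument is identical in content to the paper's: any divisibility of a non-leading term by some $\LT_\sigma(g')$ would, via factor-closedness, force $\LT_\sigma(g')\mid\LT_\sigma(g)$ and contradict minimality. For (b), both you and the paper use the key observation that factor-closedness forces $\LT_\tau(g)=\LT_\sigma(g)$ for every term ordering $\tau$; the paper then simply asserts that $G$ is therefore the reduced $\tau$-Gr\"obner basis for every $\tau$, whereas you make the implicit equality step explicit by arguing $\mathcal{O}_\tau(I)\subseteq\mathcal{O}_\sigma(I)$ and invoking the fact that a proper subset of a basis cannot be a basis. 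This extra care is a virtue, since the paper's one-line jump (``same leading terms, hence reduced Gr\"obner basis for every ordering'') tacitly relies on exactly the dimension/basis comparison you spell out, and your version works uniformly without assuming $\dim_K(P/I)<\infty$.
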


\begin{proof}
Let us prove claim (a). For contradiction assume that $G$ is not reduced.
Since it is minimal and monic, there exist $i ,j \in \{1, \dots s\}$ and a
power product $\tilde{t}\in \Supp(g_i)$ such that $\LT_\sigma(g_j)\mid \tilde{t}$.
Since $g_i$ is factor-closed we deduce that  $\LT_\sigma(g_j) \mid  \LT_\sigma(g_i$),
a contradiction to the minimality of $G$.

The proof of (b) follows from the observation that
for every $i \in \{1,\dots, s\}$ the leading term of $g_i$
is the same for every  term ordering,
hence $G$ is the reduced Gr\"obner basis of $I$ for every term ordering.
\end{proof}

\begin{theorem}\label{GFan=Basic}
Let $I\subset P$ be an ideal. The following conditions are equivalent.
\begin{enumerate}
\item[(a)] There exists a term ordering $\sigma$ and a minimal monic
$\sigma$-Gr\"obner basis $G$ of $I$ such that
all the polynomials in $G$ are factor-closed.

\item[(b)]  There exists a term ordering $\sigma$ such that
all the polynomials in the reduced $\sigma$-Gr\"obner basis
of $I$ are factor-closed.

\item[(c)]  We have $\GFN(I) = 1$.
\end{enumerate}
\end{theorem}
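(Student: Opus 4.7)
The plan is to close a cycle of implications anchored at (a), with Lemma~\ref{minimalGbasis} doing most of the work and the one substantive step being (c)~$\Rightarrow$~(b). Observe first that (b)~$\Rightarrow$~(a) is immediate, since the reduced Gr\"obner basis is always a minimal monic Gr\"obner basis. Parts (a) and (b) of Lemma~\ref{minimalGbasis} yield (a)~$\Rightarrow$~(b) and (a)~$\Rightarrow$~(c) respectively. What remains is to derive (b) from (c).

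For (c)~$\Rightarrow$~(b), assume $\GFN(I)=1$, so the leading term ideal $J:=\LT_\sigma(I)$ is the same monomial ideal for every term ordering $\sigma$. Let $\{\ell_1,\dots,\ell_s\}$ be its minimal monomial generators and $\mathcal{O}:=\mathbb{T}^n\setminus J$ the common basic set. Because the residue classes of $\mathcal{O}$ form a $K$-basis of $P/I$, for each $i$ there is a unique $K$-linear combination $h_i$ of elements of $\mathcal{O}$ with $\ell_i-h_i\in I$. Setting $g_i:=\ell_i-h_i$ gives an element of $I$ whose leading term under every $\sigma$ is $\ell_i$, since every term of $h_i$ lies in $\mathcal{O}$ and is therefore outside $\LT_\sigma(I)$; consequently $\{g_1,\dots,g_s\}$ is simultaneously the reduced $\sigma$-Gr\"obner basis for every $\sigma$.

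To show each $g_i$ is factor-closed, suppose for contradiction that some $t'\in\Supp(g_i)$ fails to divide $\ell_i$. Since $\ell_i>_\sigma t'$, we have $t'\neq \ell_i$ and hence $t'\in\Supp(h_i)\subseteq\mathcal{O}$; combined with $\ell_i\in J$ this forces $\ell_i\nmid t'$, so $\ell_i$ and $t'$ are incomparable under divisibility. Now pick an integer weight vector $w\in\mathbb{Z}^n_{>0}$ assigning sufficient weight to a coordinate in which $t'$ has higher exponent than $\ell_i$ (such a coordinate exists because $t'\nmid\ell_i$), so that $\langle w, t'\rangle>\langle w,\ell_i\rangle$, and refine $w$ by any term ordering to obtain an admissible term ordering $\tau$ with $t'>_\tau \ell_i$. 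Under $\tau$ the leading term $s:=\LT_\tau(g_i)$ satisfies $s\geq_\tau t'>_\tau\ell_i$, so $s\neq\ell_i$ and therefore $s\in\Supp(h_i)\subseteq\mathcal{O}$; but $s\in\LT_\tau(I)=J$, contradicting $s\in\mathcal{O}$.

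The main obstacle is the construction of the separating term ordering $\tau$; once one grants the standard fact that divisibility-incomparable monomials can be ordered either way by an admissible term ordering, the rest of (c)~$\Rightarrow$~(b) is bookkeeping with the uniqueness of normal forms modulo the basic set~$\mathcal{O}$.
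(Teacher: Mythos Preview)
Your proof is correct and follows essentially the same route as the paper's: both reduce to Lemma~\ref{minimalGbasis} for (a)$\Leftrightarrow$(b) and (a)$\Rightarrow$(c), and both handle (c)$\Rightarrow$(b) by contradiction, locating a non-dividing term in some $g_i$ and constructing a term ordering $\tau$ under which that term outranks $\ell_i$. The only cosmetic differences are that the paper builds $\tau$ as a lexicographic ordering with $x_j$ first (where $x_j$ is extracted via the $\gcd$ trick), while you use a weight-vector refinement, and that you spell out more carefully why $\LT_\tau(g_i)\in J\cap\mathcal{O}$ forces the contradiction; the paper compresses this into ``hence the reduced $\tau$-Gr\"obner basis of $I$ is different from $G$.''
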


\begin{proof}
From Lemma \ref{minimalGbasis}, we deduce that Claims (a) and (b) are equivalent and that (a) $\Rightarrow$ (c).
Next we prove (c) $\Rightarrow$ (b). By contradiction we assume that there
exists $i$ and a power product $\tilde{t} \in \Supp(g_i)$ such that $\tilde{t}$ does not
divide $\LT_\sigma(g_i)$. We let $t'= \tilde{t}/\gcd(\tilde{t},\LT_\sigma(g_i))$ and
$t = \LT_\sigma(g_i)) /\gcd(\tilde{t},\LT_\sigma(g_i))$. Then $t'$ and $t$ are
coprime and $t' \ne 1$.
Therefore there exists $x_j$ such that $x_j\,|\, t'$ and $x_j \nmid t$.
Let $\tau$ be the lexicographic term ordering with $x_j >_\tau x_i$ for $i \ne j$.
Then $\tilde{t} >_\tau \LT_\sigma(g_i)$ and hence the reduced $\tau$-Gr\"obner basis of $I$
is different from $G$. This is  a contradiction and the proof is complete.
\end{proof}

In the recent preprint~\cite{GG},  related results are proved for so-called neural ideals, which are generated by certain factor-closed generalizations of monomials (pseudomonomials) in Boolean rings.

Theorem \ref{GFan=Basic} gives an efficient way to check whether an ideal $I$ has a unique reduced Gr\"obner basis: in fact, one can simply inspect each minimal generator for being factor-closed.  This theorem also provides interesting consequences, as described in the following corollaries.

\begin{corollary}\label{oneGB-oneB}
Let $I$ be an ideal in  $P$ with $\GFN(I) = 1$,
and let $\mathcal{O}(I)$ be the unique G-basic set for $I$.
Then  $\mathcal{O}(I)$ is also the unique basic set for~$I$.
\end{corollary}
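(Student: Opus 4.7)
The plan is to show that an arbitrary basic set $\mathcal{O}'$ for $I$ coincides with $\mathcal{O}(I)$. Since both are $K$-bases of $P/I$, the standard linear algebra fact that a basis contained in another basis must equal it reduces the task to proving the inclusion $\mathcal{O}' \subseteq \mathcal{O}(I)$.

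For this, I would invoke Theorem~\ref{GFan=Basic}, which guarantees that the reduced Gr\"obner basis $G=\{g_1,\dots,g_s\}$ of $I$ is the same for every term ordering and consists of factor-closed polynomials. In particular, the leading term $\LT(g_i)$ is divisible by every element of $\Supp(g_i)$ (and therefore is independent of the term ordering).

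The heart of the argument is a proof by contradiction. Suppose there exists $t\in\mathcal{O}'$ with $t\notin\mathcal{O}(I)$. Then $t\in\LT(I)$, so $\LT(g_i)\mid t$ for some $i$; write $t=u\,\LT(g_i)$ with $u\in\mathbb{T}^n$. The polynomial $h=u\,g_i$ lies in $I$ and has leading term $t$. Crucially, the factor-closedness of $g_i$ forces every element of $\Supp(h)$ to be of the form $u\,s$ with $s\mid\LT(g_i)$, hence to divide $t$. Thus every term in $\Supp(t-h)$ is a \emph{proper} divisor of $t$, and since $\mathcal{O}'$ is an order ideal containing $t$, all those terms lie in $\mathcal{O}'\setminus\{t\}$. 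Passing to $P/I$, the identity $[t]=[t-h]$ then yields a non-trivial $K$-linear relation among the residue classes of $\mathcal{O}'$, contradicting the assumption that $\mathcal{O}'$ is a basic set.

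The main subtle point, and essentially the only obstacle, is recognizing that multiplying a factor-closed polynomial by a monomial preserves the property that every term divides the new leading term; once this observation is in place the contradiction drops out and the inclusion $\mathcal{O}'\subseteq\mathcal{O}(I)$, together with the linear-algebra step, closes the proof.
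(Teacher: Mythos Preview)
Your proof is correct and follows essentially the same route as the paper: invoke Theorem~\ref{GFan=Basic} to get factor-closed generators, pick $t\in\mathcal{O}'\setminus\mathcal{O}(I)$, and exhibit an element of $I$ whose support lies entirely in the order ideal $\mathcal{O}'$, contradicting linear independence. The only difference is cosmetic: you multiply $g_i$ by $u$ so that the resulting polynomial has leading term $t$, whereas the paper observes directly that $\Supp(g_i)$ already sits inside $\mathcal{O}'$ (since every term of $g_i$ divides $\LT(g_i)$, which divides $t\in\mathcal{O}'$), making the multiplication by $u$ unnecessary.
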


\begin{proof}
Let $G = \{g_1, \dots, g_s\}$ be the unique reduced Gr\"obner basis of $I$.
For contradiction, assume that there exists a basic set $\mathcal{O}$ for $I$ such that
$\mathcal{O} \ne \mathcal{O}(I)$, and let $t \in \mathcal{O} {\setminus} \mathcal{O}(I)$.
By definition of Gr\"obner basis, there exists $i$ such that $\LT(g_i)\, |\, t$.
From the theorem we know
that $g_i$ is factor-closed, hence every power product in $\Supp(g_i)$ divides~$t$.
On the other hand $\mathcal{O}$ is an order ideal, hence every power product in $\Supp(g_i)$
is in $\mathcal{O}$. Therefore we get a non-trivial linear combination of elements
of $\mathcal{O}$ which is zero in $P/I$, thus a contradiction.
\end{proof}

\begin{corollary}\label{monomial-oneB}
Let  $\Phi$ be a linear shift of~$P$. Let $I$ be a monomial ideal in $P$, and $\mathcal{O}(I)$  the set of power products
which are not divisible by any power product in $I$. 
\begin{enumerate}
\item[(a)] We have  $\GFN(I) =1$ and  $\mathcal{O}(I)$
is the unique basic set for $I$.

\item[(b)]  We have  $\GFN(\Phi(I)) =1$ and $\mathcal{O}(I)$
is the unique basic set for $\Phi(I)$.
\end{enumerate}
\end{corollary}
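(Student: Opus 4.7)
The plan is to reduce both parts to a direct application of the machinery already developed: Theorem~\ref{GFan=Basic}, Corollary~\ref{oneGB-oneB}, and Proposition~\ref{linshifts}.

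For part (a), I would begin by observing that since $I$ is a monomial ideal, it admits a unique minimal monomial generating set, say $\{t_1,\dots,t_s\}$. Each $t_i$ is a single power product, so $\Supp(t_i) = \{t_i\}$, and trivially $t_i$ divides itself. Hence each $t_i$ is factor-closed in the sense of Definition~\ref{factorclosedpoly}. Choosing any term ordering $\sigma$, the set $\{t_1,\dots,t_s\}$ is then a minimal monic $\sigma$-Gröbner basis of $I$ consisting of factor-closed polynomials, so condition~(a) of Theorem~\ref{GFan=Basic} is satisfied. This gives $\GFN(I) = 1$. Next, by Corollary~\ref{oneGB-oneB}, the unique G-basic set for $I$ is also the unique basic set for $I$. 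Finally, I would identify this unique G-basic set with $\mathcal{O}(I)$: by definition, $\mathcal{O}_\sigma(I) = \mathbb{T}^n \setminus \LT_\sigma(I)$, and since $I$ is monomial we have $\LT_\sigma(I) = I \cap \mathbb{T}^n$, so $\mathcal{O}_\sigma(I)$ consists precisely of those power products not divisible by any power product in $I$, which is exactly $\mathcal{O}(I)$.

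For part (b), I would simply invoke Proposition~\ref{linshifts}: since $\Phi$ is a linear shift, $I$ and $\Phi(I)$ share the same G-basic sets, and in particular $\GFN(\Phi(I)) = \GFN(I) = 1$. The unique G-basic set of $\Phi(I)$ is therefore still $\mathcal{O}(I)$, and another application of Corollary~\ref{oneGB-oneB} upgrades this to \emph{the} unique basic set for $\Phi(I)$.

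No step in this outline looks delicate; the only thing that requires a sentence of care is the identification of $\mathcal{O}_\sigma(I)$ with $\mathcal{O}(I)$ in part~(a), which relies on the characteristic property of monomial ideals that $\LT_\sigma(I) = I$ for every term ordering~$\sigma$. Given that, both statements follow in one line each from the results just proved.
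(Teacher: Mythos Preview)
Your proposal is correct and follows essentially the same route as the paper: invoke Theorem~\ref{GFan=Basic} for the monomial case, then Corollary~\ref{oneGB-oneB}, and for part~(b) combine part~(a) with Proposition~\ref{linshifts}. You spell out a few details the paper leaves implicit (that monomial generators are trivially factor-closed, the identification $\mathcal{O}_\sigma(I)=\mathcal{O}(I)$, and the second application of Corollary~\ref{oneGB-oneB} to $\Phi(I)$), but the argument is the same.
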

\begin{proof}
To prove Claim (a), we observe that
$\mathcal{O}(I)$ is the unique G-basic set for~$I$ by Theorem~\ref{GFan=Basic}.
Then the conclusion follows from Corollary~\ref{oneGB-oneB}.

Claim (b) follows from (a) and Theorem~\ref{linshifts}.
\end{proof}

When $I$ has a unique reduced Gr\"obner basis, the above results show that linear shifts preserve leading terms as well as basic sets.  In the remainder of this section and in Section 4, we will see similar results for other types of ideals.

We observe that a linear shift is composed of two types of shifts, namely $\Phi_1$ of type
$x_i\mapsto a_ix_i$ and $\Phi_2$ of type $x_i \mapsto x_i+b_i$.
Clearly, if $I$ is a monomial ideal we have $I = \Phi_1(I)$, so the only non-trivial part of
Corollary~\ref{monomial-oneB} is that $\GFN(\Phi_2(I)) =1$.

\medskip
\subsection{Distractions}
In Corollary~\ref{monomial-oneB} we have seen a modification of monomial ideals
which produces ideals with GFan number equal to 1.  In the literature there is another
interesting  construction which yields the same result. For a complete introduction to the theory
of distractions, see~\cite{KR2}.

\begin{definition}\label{defofdidtrac}
Let~$K$ be an infinite field. For $i\!=\!1, \dots, n$, 
let ${\pi_i = (c_{i1},c_{i2},\dots)}$ be a sequence with $c_{ij}\in K$ and
$c_{ij}\ne c_{ik}$ for every $j\ne k$. Set
$\pi=(\pi_1,\dots,\pi_n)$.

\begin{enumerate}
\item For every power product $t=x_1^{\alpha_1}\cdots x_n^{\alpha_n}$
in $\mathbb T^n$, the  polynomial
$${
D_\pi(t) = \tprod_{i=1}^{\alpha_1} (x_1-c_{1i}) \;\cdot\;
\tprod_{i=1}^{\alpha_2} (x_2-c_{2i}) \;\cdots\;
\tprod_{i=1}^{\alpha_n} (x_n-c_{ni})
}$$
is called the \textbf{distraction} of~$t$ with respect to~$\pi$.
\index{distraction!of a term}%

\item Let $I$ be a monomial ideal in~$P$, and let $\{t_1,\dots,t_s\}$
be the unique minimal monomial system of generators of~$I$. Then
we say that the ideal ${D_\pi(I)=\langle D_\pi(t_1),\dots,D_\pi(t_s)\rangle}$ is
the \textbf{distraction} of~$I$ with respect to~$\pi$.
\end{enumerate}
\end{definition}

\begin{theorem}[\cite{KR2}]\label{6.2.12}
Let $I$ be a monomial ideal in $P$, let $\{t_1, \dots, t_s\}$ be a minimal set of
power products which generates $I$, let $\pi=(\pi_1, \dots, \pi_n)$ be sequences of pairwise
distinct elements in $K$,
and  let $D_\pi(I) =\langle D_\pi(t_1), \dots, D_\pi(t_s)\rangle$ be
the corresponding distraction of $I$.
\begin{enumerate}
\item[(a)] The ideal $D_\pi(I)$ is radical.

\item[(b)] The set $\{ D_\pi(t_1), \dots, D_\pi(t_s) \}$  is the reduced $\sigma$-Gr\"obner
basis of $D_\pi(I)$ for every term ordering $\sigma$.

\item[(c)] We have $\GFN(D_\pi(I)) = 1$.
\end{enumerate}
\end{theorem}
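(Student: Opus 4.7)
The plan is to establish claim (b) first, from which (c) follows immediately via Lemma~\ref{minimalGbasis}, and to deduce (a) using the structural information obtained along the way. The fundamental observation is that for any power product $t = x_1^{\alpha_1}\cdots x_n^{\alpha_n}$, when one expands
$$
D_\pi(t) = \prod_{i=1}^{n}\prod_{j=1}^{\alpha_i}(x_i - c_{ij}),
$$
every monomial in its support has the form $x_1^{\beta_1}\cdots x_n^{\beta_n}$ with $\beta_i \leq \alpha_i$, hence divides $t$. So $D_\pi(t)$ is factor-closed in the sense of Definition~\ref{factorclosedpoly} with maximal element $t$, and consequently $\LT_\sigma(D_\pi(t)) = t$ for every term ordering $\sigma$. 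Since $\{t_1, \dots, t_s\}$ is minimal, its elements are pairwise non-divisible, so $G = \{D_\pi(t_1), \dots, D_\pi(t_s)\}$ has pairwise non-divisible leading terms independent of $\sigma$, and $I \subseteq \LT_\sigma(D_\pi(I))$ for every $\sigma$.

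The main obstacle is to upgrade this inclusion to an equality, equivalently to verify that $G$ is a $\sigma$-Gr\"obner basis of $D_\pi(I)$ for every $\sigma$. My preferred route is a Hilbert function argument: since $\LT_\sigma(D_\pi(I)) \supseteq I$ holds for every $\sigma$, it suffices to show that $P/I$ and $P/D_\pi(I)$ have identical Hilbert functions, which then forces the inclusion to be equality. This should be accessible by passing to the associated graded ring: the leading form of $D_\pi(t)$ in the standard degree filtration is $t$, so the initial ideal of $D_\pi(I)$ with respect to that filtration coincides with $I$, yielding the desired dimensional matching. An alternative route is to verify Buchberger's criterion directly, exploiting the explicit factored form of each $D_\pi(t_i)$ to reduce every S-polynomial modulo $G$ to zero. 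Once $G$ is known to be a Gr\"obner basis, the factor-closedness of each $D_\pi(t_i)$ together with Lemma~\ref{minimalGbasis} yields that $G$ is reduced, and since $\sigma$ was arbitrary one obtains both (b) and $\GFN(D_\pi(I)) = 1$, namely (c).

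For (a), I would exploit the factorization of each $D_\pi(t_i)$ into distinct linear forms in separate variables. Write $I = \bigcap_\alpha Q_\alpha$ as a finite intersection of irreducible monomial ideals $Q_\alpha = \langle x_{i_1}^{a_{i_1}}, \dots, x_{i_r}^{a_{i_r}}\rangle$; each $D_\pi(Q_\alpha)$ is generated by univariate polynomials with pairwise distinct roots in separate variables, hence decomposes as an intersection of linear prime ideals of the form $\langle x_{i_1}-c_{i_1 j_1}, \dots, x_{i_r}-c_{i_r j_r}\rangle$ and is in particular radical. The identity $D_\pi(I) = \bigcap_\alpha D_\pi(Q_\alpha)$ would then complete the proof: the inclusion $\subseteq$ follows because each generator $t_\ell$ of $I$ is divisible by some generator of $Q_\alpha$, forcing $D_\pi(t_\ell) \in D_\pi(Q_\alpha)$ by the product structure of the distraction; the reverse inclusion follows from comparing Hilbert functions, both ideals sharing the common leading term ideal $I$ by the argument of the previous paragraph, and one-sided containment together with equal Hilbert functions forces equality.
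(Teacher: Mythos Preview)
The paper does not supply its own argument for this result; it simply cites Theorem~6.2.12 of~\cite{KR2}. So there is no in-paper proof to compare against, and your plan must be judged on its own.

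Your architecture is right, and your treatments of (c) and (a) are fine once (b) is in hand. But your preferred route to (b) has a genuine gap. You assert that because the leading form of each generator $D_\pi(t_i)$ is $t_i$, the initial ideal of $D_\pi(I)$ with respect to the degree filtration equals $I$. That inference is invalid in general: the leading-form ideal of an ideal can strictly contain the ideal generated by the leading forms of any fixed generating set --- this is exactly why Macaulay (or Gr\"obner) bases are nontrivial. So the Hilbert-function comparison you propose is circular: showing that the initial ideal equals $I$ is equivalent to showing that $G$ is a Gr\"obner basis for a degree-compatible order, which is what you set out to prove. What is actually needed is either the vector-space identity $D_\pi(I)=\operatorname{span}_K\{D_\pi(t):t\in I\cap\mathbb T^n\}$, obtainable from the recursion $x_j\cdot D_\pi(t)=D_\pi(x_jt)+c_{j,\alpha_j+1}D_\pi(t)$ and immediately yielding $\LT_\sigma(f)\in I$ for every nonzero $f\in D_\pi(I)$, or the Buchberger check you mention as an alternative. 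The latter is short and does work: with $m=\operatorname{lcm}(t_i,t_j)$ one has $D_\pi(m)=h_i\,D_\pi(t_i)=h_j\,D_\pi(t_j)$ where $\LT_\sigma(h_k)=m/t_k$, so the S-polynomial rewrites as $(\tfrac{m}{t_i}-h_i)D_\pi(t_i)-(\tfrac{m}{t_j}-h_j)D_\pi(t_j)$, a standard representation with all summands strictly below $m$.
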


See Theorem 6.2.12 in~\cite{KR2} for a proof.

\begin{example}\label{firstdistraction}
Let $K = \mathbb Q$.  Consider $I = \langle t_1, t_2\rangle$, where  $t_1 = x^3y$ and $t_2 = x^2y^4$.  
Set $\pi_1 = (3,2,5)$, $\pi_2 = (2, -1,3,12)$. As the elements in $\pi_1, \pi_2$
are pairwise distinct, we can make the distraction of $I$ with respect to $\pi= (\pi_1, \pi_2)$:
$D_\pi(I) = \langle D_\pi(t_1), D_\pi(t_2)\rangle$ where
$$D_\pi(t_1) = (x-3)(x-2)(x-5)(y-2),$$
$$D_\pi(t_2) = (x-3)(x-2)(y-2)(y+1)(y-3)(y-12).$$
According to Theorem \ref{6.2.12}, the ideal $D_\pi(I)$ is radical and$\{D_\pi(t_1), D_\pi(t_2)\}$ 
is the reduced
$\sigma$-Gr\"obner basis for every $\sigma$,
and so $\GFN(D_\pi(I)) =~1$.
\end{example}

The assumption that $K$ is infinite guarantees the existence 
of a distraction of all monomial ideals.
However, in order to define the distraction~$D_\pi(I)$ of a single
monomial ideal~$I$, it suffices to specify
the first $d_i$  elements of the sequence~$\pi_i$
where $d_i = \max\{\deg_{x_i}(t_j)\mid j\in\{1,\dots,s\}\}$ for $i =1,\dots, n$.
In particular, to distract a monomial ideal $I$,
it is sufficient to use finite tuples of elements.
Consequently we do not have to assume that~$K$ is infinite,
as long as $K$ has \textit{sufficiently many} elements.

\begin{example}
Consider the ideal in Example~\ref{firstdistraction}.
As the largest exponent is~4, we need a field which  has at least four elements.
So $\mathbb F_2$ and $\mathbb F_3$ are excluded.
On the other hand, if $K = \mathbb F_5$ we can choose $\pi=(\pi_1, \pi_2)$
where  $\pi_1 = (1,3,0)$, $\pi_2 = (0,1,2,3)$. Then we get  $D_\pi(t_1) = (x-1)(x-3)xy$, \
$D_\pi(t_2) = (x-1)(x-3)y(y-1)(y-2)(y-3)$.
\end{example}

As a consequence of Theorem~\ref{6.2.12} and Proposition~\ref{linshifts},
we get the following result.

\begin{corollary}\label{distrpoints}
We make the same assumptions as in Theorem~\ref{6.2.12} with the extra-condition 
that $I$ is a zero-dimensional ideal. 

\begin{enumerate}
\item[(a)] The ideal $D_\pi(I)$ is an ideal of points and
$\GFN(D_\pi(I)) = 1$.

\item[(b)]  If $\Phi$ is a linear shift of $P$, the ideal $\Phi(D_\pi(I))$ is an ideal of points, and
$\GFN(\Phi(D_\pi(I))) = 1$.
\end{enumerate}
\end{corollary}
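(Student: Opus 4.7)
The plan is to derive both claims from Theorem~\ref{6.2.12} and Proposition~\ref{linshifts}, with the only non-routine step being to verify that the relevant ideals really are vanishing ideals of finite point sets in $K^n$.

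For part~(a), I would first observe that since $I$ is a zero-dimensional monomial ideal, each indeterminate $x_i$ must appear in some minimal generator as a pure power $x_i^{d_i}$; otherwise the residue classes $\{x_i^k + I \mid k \ge 0\}$ would be $K$-linearly independent in $P/I$, contradicting $\dim_K(P/I) < \infty$. Its distraction $D_\pi(x_i^{d_i}) = \prod_{j=1}^{d_i}(x_i - c_{ij})$ lies in $D_\pi(I)$ and already splits completely over $K$. Hence $D_\pi(I)$ is zero-dimensional and its zero locus is contained in the finite grid $\prod_{i=1}^n \{c_{i1}, \dots, c_{id_i}\} \subseteq K^n$. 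Combined with the radicality granted by Theorem~\ref{6.2.12}(a), this forces $D_\pi(I) = \mathcal{I}(\mathbb{X})$ for a finite set $\mathbb{X} \subseteq K^n$; in other words, $D_\pi(I)$ is an ideal of points. The equality $\GFN(D_\pi(I)) = 1$ is then exactly Theorem~\ref{6.2.12}(c).

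For part~(b), I would identify the linear shift $\Phi: P \to P$ with the affine bijection $\phi: K^n \to K^n$ defined by $\phi(c) = (a_1 c_1 + b_1, \dots, a_n c_n + b_n)$. A one-line check yields $(\Phi f)(c) = f(\phi(c))$ for all $f \in P$ and $c \in K^n$, and therefore $\Phi(\mathcal{I}(\mathbb{Y})) = \mathcal{I}(\phi^{-1}(\mathbb{Y}))$ for any finite $\mathbb{Y} \subseteq K^n$. Applied to the set $\mathbb{X}$ produced in~(a), this exhibits $\Phi(D_\pi(I))$ as the vanishing ideal of the finite set $\phi^{-1}(\mathbb{X}) \subseteq K^n$, so $\Phi(D_\pi(I))$ is again an ideal of points. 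Finally, $\GFN(\Phi(D_\pi(I))) = \GFN(D_\pi(I)) = 1$ by Proposition~\ref{linshifts}(b) together with~(a).

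The main obstacle, modest as it is, lies in part~(a): one must argue carefully that radicality combined with containment in a grid whose nodes are already in $K$ is enough to guarantee $K$-rationality of the underlying points, rather than just points over $\bar K$. Everything beyond that is a direct citation of the preceding theorems.
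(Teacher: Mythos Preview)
Your proof is correct and takes the same route as the paper: derive (a) from Theorem~\ref{6.2.12} and (b) from (a) together with Proposition~\ref{linshifts}. The only difference is granularity --- the paper cites~\cite{KR2}, Theorem~6.2.12(a) directly for the ``ideal of points'' claim in the zero-dimensional case, whereas you reconstruct that fact via containment in a $K$-rational grid, and you also make explicit why a linear shift sends an ideal of points to an ideal of points, a step the paper leaves implicit.
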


\begin{proof} Claim (a) follows from~\cite{KR2}, Theorem 6.2.12(a) and Theorem~\ref{6.2.12}(c).

Claim~(b) follows from (a) and Proposition~\ref{linshifts}.
\end{proof}

The following  examples illustrate interesting outcomes of this corollary.

\begin{example}\label{ex:variousdistractions}
%
Consider the monomial ideal $J=\langle x^4,\, y^3,\, x^2y,\, xy^2\rangle\subset \mathbb Q[x,y]$.  We will show how to construct a set of points $\mathbb X$ such that its ideal of points~$\mathcal I(\mathbb X)$ is a distraction of $J$. 

Since the first two generators of $J$ are not mixed and have degrees larger than the powers of $x$ and $y$ in the other two generators, we can use $x^4$ and $y^3$ to construct two sequences and two polynomials.    Let $\pi_1$ be any sequence with at least 4 entries, say $(0, \frac{1}{5}, 2, -1,\ldots )$.  

Then the polynomial $f_1:=x(x-\frac{1}{5})(x-2)(x+1)$ is the distraction of $x^4$ with respect to $\pi_1$.  Similarly let $\pi_2$ be any sequence with at least 3 entries, say $(0, 1, 2, \ldots )$.  Then $f_2:=y(y-1)(y-2)=D_{\pi_2}(y^3)$.  Set $\pi=(\pi_1,\pi_2)$.  Now we can construct the distractions of the other two power products, namely $f_3 := x(x-\frac{1}{5})y$ and $f_4 := xy(y-1)$.  

Consider the ideal $I= \langle f_1, f_2, f_3, f_4 \rangle$.  Notice that $I$ is the distraction of $J$ with respect to $\pi$.  Furthermore, $I$ is the ideal of the points 
$$\mathbb X = \left\{ (0,0), (0,1), (0,2), \left(\tfrac{1}{5} ,0\right), \left(\tfrac{1}{5},1\right), (2,0), (-1,0) \right\}.$$  
We observe that $\mathcal I(\mathbb X)=D_{\pi}(J)$ and  $\GFN(D_{\pi}(J))=1$ follows from Corollary~\ref{distrpoints}(a).  

%
%
\end{example}

The following examples show that if we do not follow the rigid order in the choice of the constants 
imposed by the definition of distraction, unexpected things can happen.

\begin{example}\label{ex:wrong-order}
If we consider the polynomials $f_1, f_2$ of Example~\ref{ex:variousdistractions}
and the two polynomials 
$\ell_3 = (x-2)(y-1)(y-2),\quad
\ell_4 = (x+1)\left(x-\frac{1}{5}\right)(y-1)$, then
the ideal $I_4 = \langle f_1, f_2, \ell_3, \ell_4 \rangle$ is not a distraction of $J$ 
for any permutation of the tuples $(0, \frac{1}{5}, 2, -1)$ and $(0,1,2)$.
However, it has the unique reduced Gr\"obner basis 
$G=\{ x^4 -\tfrac{6}{5}x^3 -\tfrac{9}{5}x^2 +\tfrac{2}{5}x,   \
y^2 -3y +2,\  \ x^2y -x^2 +\tfrac{4}{5}xy -\tfrac{4}{5}x -\tfrac{1}{5}y +\tfrac{1}{5} \}$.

From the equalities
$$
\begin{array}{lll}
x^4 -\frac{6}{5}x^3 -\frac{9}{5}x^2 +\frac{2}{5}x &=& (x +1)(x -\frac{1}{5})(x)(x -2) \cr
y^2 -3y +2 &=& (y-1)(y-2)\cr
x^2y -x^2 +\frac{4}{5}xy -\frac{4}{5}x -\frac{1}{5}y +\frac{1}{5} &=& (y -1)(x +1)(x -\tfrac{1}{5}).
\end{array}
$$
we see that  $I_4$ is the distraction of the monomial ideal $\langle x^4,\, y^2,\, x^2y\rangle$ 
with respect to~$\pi = (\pi_1, \pi_2)$ 
where $\pi_1 = (-1,\, \tfrac{1}{5}, 0, 2)$ and $\pi_2 = (1,2)$.
\end{example}

\begin{example}\label{ex:newExample}
Consider the ideal $I$ of the following set of four points 
$$\{(0,0,0),\  (1,0,0),\  (1,1,0),\  (1,1,1)\}.$$
Its reduced Gr\"obner basis with respect to $\sigma={\tt DegRevLex}$ is
$$ \{z^2 -z,\  yz -z,\  xz -z, \ y^2 -y, \ xy -y, \ x^2 -x\}.$$\
All polynomials in this basis are factor-closed, hence we have $\GFN(I) = 1$ by 
Theorem~\ref{GFan=Basic}.
We have  $\LT_\sigma(I) =\langle z^2,\  yz,\  xz , \ y^2 , \ xy, \ x^2\rangle$.
However, $\LT_\sigma(I)$ is not a distraction of $I$ since 
 we have the equalities 
 $$yz-z = z\mathbf{(y-1)}\  \text{\  and \ }  \ xy-y =(x -1)\mathbf{y}$$

\end{example}

\medskip
\subsection{Natural Distractions and Staircases}
\label{Special Distractions}

In this subsection we introduce an interesting family of distractions.
We recall that an ideal is called \textit{irreducible} if it cannot be
written as the intersection of two ideals, both of which properly contain it,
and use some results from~\cite{KR2}.

\begin{proposition}\label{intersectirred}
Let $K$ be a field and let $P = K[x_1, \dots, x_n]$.
\begin{enumerate}
\item[(a)] Every proper ideal in $P$ is a finite intersection of irreducible ideals.

\item[(b)] A monomial ideal $I$ in $P$ is irreducible if and only if it is of the
form $I = \langle x_{i_1}^{d_1},\dots, x_{i_s}^{d_s}\rangle$ with $1 \le i_1 < \cdots < i_s\le n$
and $d_1, \dots, d_s \in \mathbb N$.

\item[(c)] A zero-dimensional monomial ideal is irreducible if and only if it is of the
form  $I = \langle x_1^{d_1},\dots, x_n^{d_n}\rangle$ with $d_1, \dots, d_s \in \mathbb N$.
\end{enumerate}
\end{proposition}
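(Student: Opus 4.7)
The plan is to invoke standard commutative-algebra machinery together with a careful look at monomial generators; all three statements are classical and appear in \cite{KR2}, so the goal is really to outline the proof skeleton one would follow.

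For (a) I would argue by Noetherian induction. Let $\mathcal{F}$ be the family of proper ideals of $P$ that cannot be written as a finite intersection of irreducible ideals. If $\mathcal{F}$ is non-empty, then since $P$ is Noetherian it contains a maximal element $I$. By the definition of $\mathcal{F}$, the ideal $I$ is itself not irreducible, so $I = J_1 \cap J_2$ with $I \subsetneq J_1$ and $I \subsetneq J_2$. Maximality of $I$ in $\mathcal{F}$ forces $J_1$ and $J_2$ each to be finite intersections of irreducible ideals, whence so is $I$, a contradiction. Thus $\mathcal{F} = \emptyset$.

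For (b) one direction is straightforward: if $I = \langle x_{i_1}^{d_1}, \dots, x_{i_s}^{d_s}\rangle$, one can read off a monomial $K$-basis of $P/I$ and check directly that $I$ admits no non-trivial decomposition as an intersection of strictly larger monomial ideals. The harder direction asks us to show that every other monomial ideal is reducible. The key observation is this: if some minimal generator $t$ of $I$ factors as $t = u \cdot v$ with $u$ and $v$ coprime and both non-trivial, then
$$I \;=\; \bigl(I + \langle u \rangle\bigr) \,\cap\, \bigl(I + \langle v \rangle\bigr),$$
and both sides on the right strictly contain $I$. I would verify this equality at the level of monomial bases, using that a power product belongs to the right-hand intersection iff it is divisible either by $u$ or by some generator of $I$, and simultaneously by $v$ or by some generator of $I$; coprimality of $u$ and $v$ then forces membership in $I$. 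Iterating the splitting whenever a minimal generator involves two coprime non-trivial factors eventually leaves only pure powers of distinct variables.

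Part (c) then follows immediately by combining (b) with the definition of zero-dimensional: an irreducible monomial ideal $\langle x_{i_1}^{d_1}, \dots, x_{i_s}^{d_s}\rangle$ is zero-dimensional if and only if $\{i_1, \dots, i_s\} = \{1, \dots, n\}$, because any missing variable gives rise to a free polynomial subalgebra in $P/I$, violating finite-dimensionality. The main obstacle will be the reducible direction of (b): one must pick the correct decomposition and check equality on the monomial level, a divisibility bookkeeping argument that is easy to sketch but requires care to present cleanly.
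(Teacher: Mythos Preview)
The paper itself does not argue these claims: for (a) and (b) it simply cites Propositions~5.6.17 and~6.2.11 of~\cite{KR2}, and it observes that (c) follows immediately from~(b). Your sketch therefore already goes further than the paper, and your treatments of~(a), of~(c), and of the reducible direction of~(b) (splitting along a mixed minimal generator $t=uv$) are correct.

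There is, however, a genuine gap in the irreducible direction of~(b). You propose to ``check directly that $I$ admits no non-trivial decomposition as an intersection of strictly larger \emph{monomial} ideals'', but irreducibility requires ruling out decompositions $I=J_1\cap J_2$ with \emph{arbitrary} ideals $J_i\supsetneq I$, and there is no automatic reduction to the monomial case: replacing each $J_i$ by the ideal generated by the monomials it contains may collapse both sides back to~$I$ (for instance $I=\langle x,y\rangle^2$ with $J_{1,2}=I+\langle x\pm y\rangle$ in characteristic~$\ne 2$). One clean fix, after relabeling so that $I=\langle x_1^{d_1},\dots,x_s^{d_s}\rangle$, is to set $t=x_1^{d_1-1}\cdots x_s^{d_s-1}$ and show that every nonzero ideal of $P/I$ contains a nonzero element of the form $c\,\bar t$ with $c\in K[x_{s+1},\dots,x_n]$: given $0\ne f$, pick the lex-smallest exponent~$\alpha$ occurring in its expansion over the monomial $K[x_{s+1},\dots,x_n]$-basis and multiply by $\bar x^{(d_1-1,\dots,d_s-1)-\alpha}$, which kills every other term. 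Two such elements $c_1\bar t$ and $c_2\bar t$ then share the nonzero common multiple $c_1c_2\bar t$, so any two nonzero ideals of $P/I$ meet nontrivially, and $I$ is irreducible.
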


\begin{proof}
For Claim (a) see~\cite{KR2}, Proposition 5.6.17. For Claim (b) see~\cite{KR2}, Proposition 6.2.11.
Claim (c) follows immediately from (b).
\end{proof}

From Corollary~\ref{monomial-oneB} we know that associated to every
monomial ideal there is a unique basic set $\mathcal{O}(I)$, the set of
power products which are not divisible by any power product in $I$.
This observation motivates the following definition.
Since the exponents of every monomial are natural numbers, they can be viewed
as elements of any field $K$ via the natural map $\mathbb N \to K$.

\begin{definition}\label{associatedpoints}
Let $K$ be a field and let $P = K[x_1, \dots, x_n]$.

\begin{enumerate}
\item Given $t = x_1^{a_1}\cdots x_n^{a_n}$, we say that $p(t) :=
(a_1, \dots, a_n) \in K^n$ is the \textbf{point associated to $t$}.

\item Let $I$ be a monomial ideal and let $\mathcal{O}(I)$ be the unique basic set
associated to $I$. Then the set $\{ p(t) \ | \  t \in \mathcal{O}(I) \}$ is called the
\textbf{staircase} of points associated to $I$ and denoted by $\Stair(I)$.
\end{enumerate}
\end{definition}

\begin{example}\label{exstaircase}
Let $I = \langle x^2, \, xyz^2,\,  y^2, \, z^3\rangle \subset \mathbb Q[x,y,z]$. Then we have
$$
\mathcal{O}(I) = \{ 1,\,  z,\,  z^2, \, y, \, yz, \, yz^2,\,  x,\,  xz,\,  xz^2, \, xy, \, xyz \}.
$$
Hence we get
\begin{eqnarray*}
\Stair(I) = \{ (0,0, 0), \, (0,0, 1), \, (0,0, 2), \, (0,1, 0), \, (0,1,1) \, (0,1, 2),  & \\
(1,0, 0), \, (1,0, 1), \, (1,0, 2), \, (1,1, 0), \, (1,1, 1) \}.
\end{eqnarray*}
\end{example}

\begin{lemma}\label{intersectionmonomial}
Assume $I_1$, $I_2$ are zero-dimensional monomial ideals in $P$.
Let $\pi_1,  \dots, \pi_n$ be sequences of pairwise distinct elements of the field~$K$,
and set $\pi=(\pi_1, \dots, \pi_n)$.
\begin{enumerate}
\item[(a)] $D_\pi(I_1\cap I_2) = D_\pi(I_1) \cap D_\pi(I_2)$.

\item[(b)] $\mathcal{O}(I_1+I_2) = \mathcal{O}(I_1)\cap \mathcal{O}(I_2)$

\item[(c)] $\mathcal{O}(I_1\cap I_2) = \mathcal{O}(I_1)\cup \mathcal{O}(I_2)$.

\item[(d)] $\Stair(I_1\cap I_2) = \Stair(I_1)\cup \Stair(I_2)$.
\end{enumerate}
\end{lemma}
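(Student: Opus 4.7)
The plan is to dispatch (b), (c), and (d) first as elementary combinatorial statements about monomial ideals, and then leverage (d) to prove (a). For (b), observe that the monomial ideal $I_1 + I_2$ is generated by the union of the minimal monomial generators of $I_1$ and $I_2$, so a power product belongs to $I_1 + I_2$ iff it lies in $I_1$ or in $I_2$; complementing in $\mathbb T^n$ yields $\mathcal O(I_1+I_2) = \mathcal O(I_1) \cap \mathcal O(I_2)$. For (c), the monomial ideal $I_1 \cap I_2$ contains a power product iff both $I_1$ and $I_2$ do, so complementing gives $\mathcal O(I_1 \cap I_2) = \mathcal O(I_1) \cup \mathcal O(I_2)$. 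Then (d) follows from (c) via the exponent bijection $t \mapsto p(t)$, which carries set-theoretic unions in $\mathbb T^n$ to set-theoretic unions in $K^n$.

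For (a), I would first establish an auxiliary identification: for a zero-dimensional monomial ideal $I$, the distraction $D_\pi(I)$ equals the vanishing ideal $\mathcal I(\mathbb X_\pi(I))$, where $\mathbb X_\pi(I) := \{(c_{1,a_1+1},\ldots,c_{n,a_n+1}) : x_1^{a_1}\cdots x_n^{a_n} \in \mathcal O(I)\}$ is the $\pi$-image of $\Stair(I)$. The inclusion $\mathbb X_\pi(I) \subseteq V(D_\pi(I))$ is a direct check: if $x^a \in \mathcal O(I)$ and $t_j$ is a minimal generator of $I$, then $t_j \nmid x^a$ forces some index $i$ with $a_i + 1 \le \deg_{x_i}(t_j)$, making $(x_i - c_{i,a_i+1})$ a factor of $D_\pi(t_j)$ that vanishes at the associated point. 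Combining this with Theorem~\ref{6.2.12}(a) (radicality of $D_\pi(I)$), the Hilbert function identity $\dim_K(P/D_\pi(I)) = \dim_K(P/I) = |\mathcal O(I)|$ (a consequence of Theorem~\ref{6.2.12}(b), since the leading terms are preserved), and the injectivity of the coordinate map defining $\mathbb X_\pi(I)$ (from the pairwise distinctness of the $c_{ij}$), one obtains $|\mathbb X_\pi(I)| = |V(D_\pi(I))|$ and hence $\mathbb X_\pi(I) = V(D_\pi(I))$; radicality then upgrades this set-level equality to an equality of ideals.

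With this identification in hand, (a) reduces to the chain $D_\pi(I_1) \cap D_\pi(I_2) = \mathcal I(\mathbb X_\pi(I_1)) \cap \mathcal I(\mathbb X_\pi(I_2)) = \mathcal I(\mathbb X_\pi(I_1) \cup \mathbb X_\pi(I_2)) = \mathcal I(\mathbb X_\pi(I_1 \cap I_2)) = D_\pi(I_1 \cap I_2)$, where the penultimate equality is (d) rephrased in terms of $\mathbb X_\pi$. The main obstacle is the dimension count in the auxiliary identification, which pivots on Theorem~\ref{6.2.12}(b) to ensure that distraction preserves the leading term ideal and hence the Hilbert function; once this is in place, the rest is essentially bookkeeping with set-theoretic identities. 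An alternative direct verification of the inclusion $D_\pi(I_1 \cap I_2) \subseteq D_\pi(I_1) \cap D_\pi(I_2)$ is possible by noting that $I_1 \cap I_2$ is generated by $\mathrm{lcm}$'s of pairs of generators and that $D_\pi(\mathrm{lcm}(t,s))$ is polynomial-divisible by both $D_\pi(t)$ and $D_\pi(s)$, but the reverse inclusion seems to genuinely require the radicality argument above.
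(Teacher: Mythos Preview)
Your proof is correct, but the organization differs from the paper's in two notable ways.

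For (c), the paper takes a surprisingly indirect route: it establishes only the inclusion $\mathcal O(I_1)\cup\mathcal O(I_2)\subseteq\mathcal O(I_1\cap I_2)$ directly, and then argues equality by a cardinality count derived from the short exact sequence $0\to P/(I_1\cap I_2)\to P/I_1\oplus P/I_2\to P/(I_1+I_2)\to 0$ together with part~(b). Your one-line argument (a monomial lies in the monomial ideal $I_1\cap I_2$ iff it lies in both $I_1$ and $I_2$, then complement) is both correct and strictly simpler; it also does not require the zero-dimensionality hypothesis.

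For (a), the paper does not give a proof at all: it simply cites Proposition~6.2.10 of~\cite{KR2}. Your self-contained argument via the identification $D_\pi(I)=\mathcal I(\mathbb X_\pi(I))$ is valid, and it is worth observing that this identification is precisely the $\pi$-generalization of Proposition~\ref{naturaldistraction}, which the paper proves \emph{later} using (a) as an input. You have in effect reversed the dependency: you establish the distraction-equals-vanishing-ideal statement first (using Theorem~\ref{6.2.12} for radicality and the Hilbert-function count) and then deduce (a) from it via (d), whereas the paper imports (a) from~\cite{KR2} and deduces the vanishing-ideal description afterward. Both orders are sound; yours has the virtue of being self-contained within the paper.
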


\begin{proof}
Claim (a) follows from~\cite{KR2}, Proposition 6.2.10.

To prove Claim (b), notice that the inclusions $I_1 \subseteq I_1+I_2$
and  $I_2 \subseteq I_1+I_2$ imply  $\mathcal{O}(I_1+I_2) \subseteq \mathcal{O}(I_1)$
and $\mathcal{O}(I_1+I_2) \subseteq \mathcal{O}(I_2)$.  It follows that 
$\mathcal{O}(I_1+I_2) \subseteq \mathcal{O}(I_1)\cap \mathcal{O}(I_2)$.
On the other hand, if $t$ is a power product with $t \notin I_1$ and $t \notin I_2$,
then $t \notin I_1+I_2$ and the claim is proved.

Let us now prove (c). From the inclusions $I_1\cap I_2 \subseteq I_1$
and  $I_1\cap I_2 \subseteq I_2$ we get the inclusions $\mathcal{O}(I_1) \subseteq \mathcal{O}(I_1\cap I_2)$
and $\mathcal{O}(I_2) \subseteq \mathcal{O}(I_1\cap I_2)$, hence the inclusion
$\mathcal{O}(I_1)\cup \mathcal{O}(I_2) \subseteq \mathcal{O}(I_1\cap I_2)$.
To conclude the proof, we need to show that the two sets have the same number of elements.
On the other hand, if $I$ is a zero-dimensional monomial ideal,
the number of elements of $\mathcal{O}(I)$ is finite. Since  we have
$$\card\big(\mathcal{O}(I_1)\cup \mathcal{O}(I_2)\big) = \card(\mathcal{O}(I_1)) +\card(\mathcal{O}(I_2))
- \card\big(\mathcal{O}(I_1)\cap \mathcal{O}(I_2)\big) $$
we need to prove the equality
$$
\card( \mathcal{O}(I_1\cap I_2)) =\card(\mathcal{O}(I_1)) +\card(\mathcal{O}(I_2))
- \card\big(\mathcal{O}(I_1)\cap \mathcal{O}(I_2)\big). \eqno{(1)}
$$
To show this equality, we construct the exact sequence of $K$-vector spaces
$$
0 \;\to\; P/(I_1\cap I_2) \;\to\; (P/I_1) \oplus (P/I_2) \;\to\; P/(I_1+I_2) \;\to\; 0
$$
defined by the map
$$P/(I\cap J) \to (P/I)\oplus (P/J) \text{  given by  } f+(I\cap J) \mapsto (f+I,f+J)$$
and the map
$$(P/I)\oplus (P/J)\to P/(I+J) \text{  given by  } (f+I,\, g+J) \mapsto f-g+I+J.$$
From the exact sequence, we get the equality
$$
\card( \mathcal{O}(I_1\cap I_2)) = \card(\mathcal{O}(I_1)) +\card(\mathcal{O}(I_2))
- \card\big(\mathcal{O}(I_1+I_2)\big).
$$
From Claim (b) we deduce that this equality coincides with  (1).

Claim (d) follows immediately from (c)
and the definition of a staircase. 
Hence the proof is complete.
\end{proof}

We are ready to introduce a  special type of distractions.

\begin{definition}
Consider a monomial ideal $I$ in~$P$.  Let $d_1, \dots, d_n$ be
the maximal exponents of $x_1, \dots, x_n$ in the minimal set of generators of~$I$ and take $d = \max\{d_i\}$. Assume that $0, 1, \dots, d-1$ are distinct elements of $K$.
Furthermore, let $\pi_{d_i}= (0,1,2,3,\dots, d_i-1)$ and let $\pi_{\rm nat} = (\pi_{d_1}, \dots, \pi_{d_n})$.
Then the distraction $D_{\pi_{\rm nat}}(I)$ is called the \textbf{natural  distraction} (or
classic distraction) of the ideal~$I$.
\end{definition}

The following proposition shows the connections between natural distractions and staircases.

\begin{proposition}\label{naturaldistraction}
Consider a monomial ideal $I$ in~$P$.  
Then we have the equality $\mathcal I(\Stair(I)) = D_{\pi_{\rm nat}}(I)$.
\end{proposition}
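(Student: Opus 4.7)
The plan is to identify $D_{\pi_{\rm nat}}(I)$ with the vanishing ideal of its zero set in $K^n$ via radicality, and then show by a direct combinatorial argument that this zero set is precisely $\Stair(I)$. By Theorem~\ref{6.2.12}(a), the ideal $D_{\pi_{\rm nat}}(I)$ is radical, so it equals $\mathcal{I}(Z(D_{\pi_{\rm nat}}(I)))$ provided all its zeros lie in $K^n$ (where $Z(-)$ denotes the zero set). The workhorse observation will be that for any monomial $t = x_1^{a_1}\cdots x_n^{a_n}$, evaluating
$$D_{\pi_{\rm nat}}(t)(b_1,\dots,b_n) \;=\; \prod_{i=1}^{n}\prod_{j=0}^{a_i-1}(b_i-j)$$
yields zero iff there exists an index $i$ with $b_i\in\{0,1,\dots,a_i-1\}$, where we view the $b_i$ through the natural map $\mathbb N\hookrightarrow K$.

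First I would observe that the statement is really aimed at the situation in which $\Stair(I)$ is finite, i.e. $I$ is zero-dimensional; in that case each pure power $x_i^{d_i}$ is itself a minimal generator of $I$, so its distraction $\prod_{j=0}^{d_i-1}(x_i-j)$ lies in $D_{\pi_{\rm nat}}(I)$. Applying the vanishing criterion above to these particular generators immediately forces any zero to satisfy $b_i\in\{0,1,\dots,d_i-1\}\subset K$ for each $i$. In particular every zero is an integer point, and so $Z(D_{\pi_{\rm nat}}(I))\subseteq K^n$ with no loss passing to an algebraic closure.

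Next I would translate between integer points and monomials: for $p=(b_1,\dots,b_n)$ with each $b_i$ a non-negative integer, set $m_p=x_1^{b_1}\cdots x_n^{b_n}$. For each minimal generator $t_k=x_1^{a_{k,1}}\cdots x_n^{a_{k,n}}$, the equivalence from the first paragraph rewrites $D_{\pi_{\rm nat}}(t_k)(p)=0$ as ``there exists $i$ with $b_i<a_{k,i}$'', which is precisely the statement that $t_k$ does \emph{not} divide $m_p$. Running this over all $k$ identifies $Z(D_{\pi_{\rm nat}}(I))$ with the integer points $p$ such that no minimal generator of $I$ divides $m_p$, i.e. with $\{p(t)\mid t\in\mathcal{O}(I)\}=\Stair(I)$. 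Combining this with the radicality argument closes the proof: $D_{\pi_{\rm nat}}(I) = \mathcal{I}(Z(D_{\pi_{\rm nat}}(I))) = \mathcal{I}(\Stair(I))$.

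The main subtlety, and essentially the only one, is ensuring the Nullstellensatz-style identity $\mathcal{I}(Z(J))=J$ for the radical ideal $J=D_{\pi_{\rm nat}}(I)$ is legitimate over a possibly non-algebraically-closed $K$. This is handled for free: the pure-power factors $\prod_{j=0}^{d_i-1}(x_i-j)$ already split completely over $K$ and pin every zero to a $K$-rational integer coordinate, so no zeros are ``lost'' in restricting from $\overline{K}^n$ to $K^n$. The remaining work is entirely bookkeeping: translating the divisibility condition ``$t_k\nmid m_p$'' into ``some $b_i<a_{k,i}$'', which is the combinatorial heart of the statement.
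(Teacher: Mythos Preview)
Your argument is correct and takes a genuinely different route from the paper's proof. The paper proceeds structurally: it first verifies the equality for \emph{irreducible} monomial ideals $\langle x_1^{a_1},\dots,x_n^{a_n}\rangle$ by a direct check, then writes a general $I$ as $\bigcap_k J_k$ with each $J_k$ irreducible (Proposition~\ref{intersectirred}) and pushes the equality through the intersection using Lemma~\ref{intersectionmonomial}(a) and~(d), namely $D_\pi(\cap J_k)=\cap D_\pi(J_k)$ and $\Stair(\cap J_k)=\cup\Stair(J_k)$. Your proof instead invokes only Theorem~\ref{6.2.12}(a) (radicality of the distraction), pins down the zero set of $D_{\pi_{\rm nat}}(I)$ in $K^n$ via the pure-power generators, and identifies that zero set with $\Stair(I)$ through the divisibility dictionary $D_{\pi_{\rm nat}}(t_k)(p)\ne 0 \Leftrightarrow t_k\mid m_p$.

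What each approach buys: the paper's version exercises and motivates Lemma~\ref{intersectionmonomial}, reinforcing the compatibility of distractions and staircases with intersections; your version is shorter and more self-contained, needing neither the irreducible decomposition nor the lemma on intersections, and it makes the combinatorial content of the statement transparent. Two small points worth tightening in your write-up: (i) the assertion that $x_i^{d_i}$ is a \emph{minimal} generator of a zero-dimensional monomial ideal deserves one line of justification (any minimal generator with $x_i$-degree exceeding the pure-power exponent would be divisible by that pure power); (ii) the step ``radical $+$ all zeros $K$-rational $\Rightarrow J=\mathcal I(Z_K(J))$'' is correct but could be made explicit by noting that a zero-dimensional radical ideal is an intersection of maximal ideals, and each maximal ideal with a $K$-rational zero is necessarily of the form $\langle x_1-c_1,\dots,x_n-c_n\rangle$. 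Both proofs implicitly sit in the zero-dimensional setting, which is where the standing assumptions on $\pi_{\rm nat}$ make sense.
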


\begin{proof}
As a first step, we prove the claim with the extra assumption that  $I$ is irreducible, hence of type
$I= \langle x_1^{a_1}, \dots, x_n^{a_n}\rangle$. In this case it is easy to see that
$\Stair(I) = \{(c_{1}, \dots, c_{n})\ | \  0\le c_{k}<a_k \hbox{\ for\ } k= 1, \dots, n \}$.
Consequently we have
\begin{eqnarray*}
\mathcal{I}(\Stair(I)) &=& \bigcap_{0\le c_{k}<a_k} \langle x_1-c_{1}, \dots, x_n-c_{n}\rangle\\
 &=&
\left\langle \prod_{c_1=1}^{a_1-1}(x_1-c_{1}), \dots, \prod_{c_n=1}^{a_n-1}(x_n-c_{n}) \right\rangle\\
&=& D_{\pi_{\rm nat}}(I).
\end{eqnarray*}
Next we prove the general claim. From Proposition~\ref{intersectirred} we get an equality $I = \bigcap_{k=1}^sJ_k$
with $J_k$ irreducible for $k =1, \dots, s$. We deduce the following equalities
\begin{eqnarray*}
\mathcal I(\Stair(I))\ = \  \mathcal I\left(\Stair\left(\cap_{k=1}^s(J_k)\right)\right)\  \Eqq{(1)}\
\mathcal I(\cup_{k=1}^s\Stair(J_k)) = \\
 \bigcap_{k=1}^s \mathcal I(\Stair(J_k))\  \Eqq{(2)}
 \bigcap_{k=1}^s D_{\pi_{\rm nat}}(J_k) \  \Eqq{(3)}  D_{\pi_{\rm nat}}(\cap_{k=1}^sJ_k) = D_{\pi_{\rm nat}}(I)
\end{eqnarray*}
where Equality (1) follows from Lemma~\ref{intersectionmonomial}(d), Equality (2) follows
from the special case discussed above, and Equality (3) follows from Lemma~\ref{intersectionmonomial}(a).
\end{proof}

The following example illustrates the special feature of natural distractions proved in the theorem
 above.
 
\begin{example}\label{exstaircasedistr}
Let $P = \mathbb Q[x, y]$ and let $I = \langle x^5,\, x^4y,\, xy^2,\, y^4 \rangle$.
Then we have
\begin{eqnarray*}
D_{\pi_{\rm nat}}(I) = \langle\ x(x-1)(x-2)(x-3)(x-4),\  x(x-1)(x-2)(x-3)y, \\
 xy(y-1),\  y(y-1)(y-2)(y-3)\ \rangle.
    \end{eqnarray*}
If we draw a picture of the power products involved, we get
\bigbreak
\makebox[11 true cm]{

\beginpicture
\setcoordinatesystem units <0.4cm,0.4cm>
\setplotarea x from 0 to 8, y from 0 to 6.5
\axis left /
\axis bottom /

\arrow <2mm> [.2,.67] from  7.5 0  to 8 0
\arrow <2mm> [.2,.67] from  0 6  to 0 6.5

\put {$\scriptstyle x$} [lt] <0.5mm,0.8mm> at 8.1 0
\put {$\scriptstyle y$} [rb] <1.7mm,0.7mm> at 0 6.6
\put {$\bullet$} at 0 0
\put {$\bullet$} at 1 0
\put {$\bullet$} at 0 1
\put {$\bullet$} at 2 0
\put {$\bullet$} at 1 1
\put {$\bullet$} at 1 0
\put {$\bullet$} at 0 2
\put {$\bullet$} at 3 0
\put {$\bullet$} at 2 1
\put {$\bullet$} at 0 3
\put {$\bullet$} at 4 0
\put {$\bullet$} at 3 1
\put {$\circ$} at 0 4
\put {$\circ$} at 1 2
\put {$\circ$} at 4 1
\put {$\circ$} at 5 0

\endpicture}

\smallskip
\noindent
where the white dots represent the generators of $I$ and
the black dots represents the power products in $\mathcal{O}(I)$.
According to Proposition~\ref{naturaldistraction}, we can check that
the set of points defined by the ideal $D_{\pi_{\rm nat}}(I)$
is exactly the staircase represented by the black dots.
\end{example}

\medskip
\section{\large Complementary ideals}
\label{Complementary Ideals}

In this section we concentrate on zero-dimensional ideals and
introduce the notion of complementary ideals (see Definition~\ref{mainAssump2}). 
Throughout the section, we let $K$ be a field and $P=K[x_1, \dots, x_n]$ as before. 

\medskip
\subsection{Grids}
It is well-known that under the assumption that $I$ is a zero-dimensional ideal,
we have $\dim_K(P/I)<\infty$ (see for instance~\cite{KR1}, Proposition 3.7.1).
Consequently, for every $i =1, \dots, n$, the natural embedding
$K[x_i]/(I\cap K[x_i]) \hookrightarrow P/I$ shows that $I\cap K[x_i] \ne\langle 0\rangle$,
and hence $I\cap K[x_i] $ is a principal non-zero ideal. We denote
its monic generator by $f_I(x_i)$. These facts motivate the following definition.

\begin{definition}\label{maximalgrid}
Let $I$ be a zero-dimensional ideal in $P$ and   for $i =1, \dots, n$,
let $f_I(x_i)$ be the monic generator of $I \cap K[x_i]$.
\begin{enumerate}
\item The ideal $\langle f_I(x_1), \dots, f_I(x_n)\rangle \subseteq I$ is called 
the \textbf{maximal grid ideal contained in  $I$} and denoted by $\maxgrid(I)$.

\item More generally, every ideal  in $P$ of type
$\langle g_1(x_1), \dots, g_n(x_n)\rangle$ and such that  ${\deg(g_i(x_i)) >0}$
for every $i =1, \dots, n$ is called a \textbf{grid ideal}.

\end{enumerate}
\end{definition}

When $I$ is a zero-dimensional ideal and 
$J = \langle g_1(x_1), \dots, g_n(x_n)\rangle$ is a grid ideal such that $J\subseteq I$,
 it is clear $g_i(x_i)$ is a multiple of $f_I(x_i)$ for every $i=1,\dots, n$.
This observation validates the use of ``maximal'' in the above definition.

Special cases of grid ideals are obtained as follows, using language from 
combinatorial experimental design.

\begin{definition}\label{grid}
Let $d_1, \dots, d_n \in \mathbb N_+$ and for $i=1,\dots,n$,
let $ (c_{i,1}, \dots, c_{i,d_i})$ be a $d_i$-tuple of pairwise distinct
elements of $K$.
Then the following set of points
$\mathbb X=\{ (c_{1,k_1},  \dots, c_{n, k_n}) \quad | \quad
1\le k_1\le d_1, \dots, 1\le k_n \le d_n\}$
is called a \textbf{grid of points} or a \textbf{full design} in $K^n$.
It consists of $\prod_{i=1}^nd_i$ points.
The vanishing ideal of $\mathbb X$ is a grid ideal generated by
$\{g_1(x_1),\dots,  g_n(x_n)\}$ where $g_i(x_i)= \prod_{j=1}^{d_i}(x_i-c_{ij})$.
\end{definition}

\begin{remark}\label{minimalgrid}
Let $\mathbb Y$ be a set of points.
The maximal grid ideal contained in~$\mathcal I(\mathbb Y)$ is the vanishing ideal of
a set $\mathbb X$ of points. In agreement with Definition~\ref{maximalgrid},
the set $\mathbb X$ is called the \textbf{minimal grid of points containing $\mathbb Y$}.
\end{remark}

\begin{example}\label{distrgrid}
As observed in Remark~\ref{minimalgrid},
given a set of points $\mathbb{Y} \subseteq K^n$, its vanishing ideal $\mathcal{I}(\mathbb{Y})$
contains $n$ univariate polynomials $f_i(x_i)$ which are products of linear
polynomials of type $x_i - c_{ij}$, and define the minimal
grid $\mathbb{K}$ containing~$\mathbb{Y}$.
If $d_i =\deg(f_i(x_i))$, then $f_i(x_i)$ is the distraction of $x_i^{d_i}$ with respect to
any permutation of the tuple of the $c_{ij}$'s.

\end{example}

We recall that for a zero-dimensional local ring $R$
with maximal ideal~$\m$,
the socle of $R$ is defined as $\soc(R) = \Ann_R(\m)$, the
annihilator of $\m$. The following easy lemma collects some properties
of  grid ideals.

\begin{lemma}\label{uniquebasis}
Let  $J=\langle g_1(x_1), \dots,g_n(x_n)\rangle$ be a grid ideal 
with $d_i = \deg(g_i(x_i))$ for $i = 1, \dots, n$.

\begin{enumerate}

\item[(a)] The ideal $J$ is zero-dimensional.

\item[(b)] The set $\{g_1(x_1), \dots, g_n(x_n)\}$
is the reduced $\sigma$-Gr\"obner basis  of~$J$
for every term ordering $\sigma$, and hence $\GFN(J) = 1$.

\item[(c)] For every term ordering $\sigma$ the residue class of
$ x_1^{d_1-1}x_2^{d_2-1}\cdots x_n^{d_n-1}$
generates $\soc(P/\LT_\sigma(J))$.

\item[(d)] We have $\mathbb{T}^n{\setminus} \LT_\sigma(J) =
\{ t \in \mathbb{T}^n \  | \  t \ {\rm divides}\   x_1^{d_1-1}x_2^{d_2-1}\cdots x_n^{d_n-1}\} $
for every term ordering $\sigma$ on $\mathbb T^n$. \end{enumerate}
\end{lemma}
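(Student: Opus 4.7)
The whole lemma rests on a single observation: because each generator $g_i(x_i)$ involves only the variable $x_i$, any term ordering $\sigma$ on $\mathbb{T}^n$ forces $\LT_\sigma(g_i(x_i)) = x_i^{d_i}$. Indeed, restricted to the set $\{1, x_i, x_i^2, \dots\}$, any term ordering agrees with the standard degree ordering on $K[x_i]$, so the highest-degree term in $g_i$, namely $x_i^{d_i}$, is the leading term regardless of $\sigma$. The plan is to parlay this uniformity into all four claims.

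For (a), I would reduce any element of $P$ modulo $g_i(x_i)$ to strictly lower $x_i$-degree; iterating gives a finite $K$-spanning set for $P/J$, namely $\{x_1^{a_1}\cdots x_n^{a_n} : 0 \le a_i < d_i\}$, so $\dim_K(P/J)<\infty$. For (b), since the leading terms $x_1^{d_1},\dots,x_n^{d_n}$ are pairwise coprime, Buchberger's first criterion (coprime leading monomials) gives $S$-pairs reducing to zero, so $\{g_1,\dots,g_n\}$ is a $\sigma$-Gröbner basis for every $\sigma$. Minimality is immediate because no $x_i^{d_i}$ divides any $x_j^{d_j}$ for $i \ne j$, and reducedness holds because every non-leading monomial in $g_i(x_i)$ is a pure power $x_i^k$ with $k<d_i$, hence not divisible by any $x_j^{d_j}$. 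Consequently
\[
\LT_\sigma(J) = \langle x_1^{d_1},\dots,x_n^{d_n}\rangle \qquad \text{for every } \sigma,
\]
and $\GFN(J)=1$ follows (alternatively by Theorem~\ref{GFan=Basic}, since each $g_i$ is factor-closed).

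Claim (d) is then a direct translation: $t = x_1^{a_1}\cdots x_n^{a_n} \notin \LT_\sigma(J)$ iff $x_i^{d_i} \nmid t$ for all $i$, iff $a_i \le d_i-1$ for all $i$, iff $t$ divides $x_1^{d_1-1}\cdots x_n^{d_n-1}$. For (c), let $R = P/\LT_\sigma(J)$ with maximal ideal $\mathfrak{m}=\langle \bar x_1,\dots,\bar x_n\rangle$, and write $\bar{t}$ for the class of $t = x_1^{d_1-1}\cdots x_n^{d_n-1}$. Then $x_i \cdot t$ has $x_i$-degree $d_i$, so it lies in $\LT_\sigma(J)$ and $\bar x_i \bar{t} = 0$; thus $\bar{t} \in \soc(R)$. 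Conversely, by (d) any other monomial $s \ne t$ in the basic set has some exponent $a_i < d_i-1$, in which case $x_i s$ is still in the basic set and hence nonzero in $R$, so $\bar s \notin \soc(R)$. Since the classes of basic-set monomials form a $K$-basis of $R$, the socle is the one-dimensional space $K\bar{t}$, i.e.\ is generated by $\bar{t}$.

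There is no substantive obstacle here: the entire lemma is a consequence of the ordering-independence of the leading terms combined with Buchberger's coprime-leading-monomial criterion. The only place that requires a little care is the socle computation in (c), where one should phrase the argument on the $K$-basis of monomials so that ``annihilated by $\mathfrak m$'' is equivalent to ``each monomial in the support is annihilated by every $\bar x_i$''.
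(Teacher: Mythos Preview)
Your proof is correct and follows essentially the same route as the paper: the key observation that $\LT_\sigma(g_i)=x_i^{d_i}$ for every $\sigma$, together with pairwise coprimality of these leading terms, yields (b), from which (a), (c), (d) are immediate. The paper's proof is extremely terse (invoking the Finiteness Criterion for (a), coprimality for (b), and declaring ``the other claims follow immediately''), so your version simply supplies the details the paper omits, including the explicit socle computation.
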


\begin{proof}
Claim (a) follows from the Finiteness
Criterion (see~\cite{KR1}, Proposition 3.7.1).
Claim (b) follows from the fact that $\LT_\sigma(g_i) = x_i^{d_i}$,
hence they are pairwise coprime. The other claims follow immediately.
\end{proof}

This lemma suggests that 
$x_1^{d_1-1}x_2^{d_2-1}\!\cdots x_n^{d_n-1}$ will be denoted by~$t_{soc}(J)$.
From Lemma~\ref{uniquebasis}(b) we know that if $J$ is a grid ideal we have  $\GFN(J) = 1$.
Therefore the set $\mathcal{O}_\sigma(J)$ is the same for every $\sigma$ and  hence it will be 
denoted by~$\mathcal{O}(J)$.

\begin{definition}\label{mainAssump2}
Let $J$ be a grid ideal  and let $J=\q_1\cap\cdots\cap\q_s$ be its primary decomposition.
For $0< t <s$, 
set $I_1 = \q_1\cap\cdots \cap\q_t$,
and $I_2=\q_{t+1}\cap\cdots \cap\q_s$.
Then we say that $I_1$, $I_2$ are \textbf{complementary ideals with respect to $J$},
or simply complementary ideals, if $J$ is clear from the context.
\end{definition}

The following lemma collects some  properties of complementary ideals.

\begin{lemma}\label{IandJ}
Let $J$ be a grid ideal in $P$ and let $I_1$, $I_2$ be complementary ideals with respect to $J$.

\begin{enumerate}
\item[(a)] $J=I_1\cap I_2$, \  $I_1 + I_2= \langle 1\rangle$, \  $I_2 = J:I_1$, \
and \  $I_1 =J:I_2$.

\item[(b)] There is an isomorphism of $K$-algebras $\varphi: P/I \cong  P/I_1 \times P/I_2$.

\item[(c)] $\dim_K(P/J) = \dim_K(P/I_1) + \dim_K(P/I_2)$, and
hence we have \\  ${\rm card}(\mathcal{O}(J))= {\rm card}(\mathcal{O}_\sigma(I_1)) 
+ {\rm card}(\mathcal{O}_\sigma(I_2))$
for every term ordering $\sigma$.

\item[(d)] If $I$ is a zero-dimensional ideal in $P$, then Claims (a), (b), and (c) hold
for $J = \maxgrid(I)$, $I_1 = I$, and $I_2 = \maxgrid(I):I$.
\end{enumerate}
\end{lemma}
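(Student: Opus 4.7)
The plan is to treat (a), (b), (c) as a consequence of the Chinese Remainder Theorem applied to the partitioned primary decomposition of $J$, and to reduce (d) to showing that $I$ respects this decomposition when $J=\maxgrid(I)$.

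The starting point for (a) is the pairwise comaximality of the primary components of $J$: since the $\m_i=\sqrt{\q_i}$ are distinct maximal ideals, any two $\q_i,\q_j$ with $i\ne j$ satisfy $\q_i+\q_j=\langle 1\rangle$ (passing to radicals gives $\sqrt{\q_i+\q_j}=\langle 1\rangle$, and then the sum itself must be the whole ring). I would then bootstrap this to $I_1+I_2=\langle 1\rangle$ by two rounds of an ``expand $\prod(u+v)$'' argument: first showing $I_1+\q_k=\langle 1\rangle$ for each $k>t$ by writing $1=u_{j,k}+v_{j,k}$ with $u_{j,k}\in\q_j$, $v_{j,k}\in\q_k$ and taking the product over $j\le t$; then applying the same trick to get $I_1+I_2=\langle 1\rangle$. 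Given this, the remaining identities in (a) are formal: $J=I_1\cap I_2$ holds by definition; $I_2\subseteq J:I_1$ follows from $I_1 I_2\subseteq I_1\cap I_2=J$; and conversely, for $f\in J:I_1$ and $1=u+v$ with $u\in I_1$, $v\in I_2$, the decomposition $f=fu+fv$ places both summands in $I_2$. The equality $I_1=J:I_2$ is symmetric.

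Claim (b) will follow by introducing the natural map $\varphi\colon P/J\to P/I_1\times P/I_2$, $f+J\mapsto(f+I_1,f+I_2)$; injectivity comes from $J=I_1\cap I_2$ and surjectivity is the standard CRT consequence of $I_1+I_2=\langle 1\rangle$. Claim (c) is then immediate: the $K$-vector-space dimension equality is read off from (b), and the cardinality statement follows from the identity $\dim_K(P/I)=\card(\mathcal{O}_\sigma(I))$ for zero-dimensional $I$ together with Lemma~\ref{uniquebasis}(b), which ensures that $\mathcal{O}_\sigma(J)$ is independent of $\sigma$.

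Claim (d) is where I expect the main obstacle to lie. The strategy is to verify that, when $J=\maxgrid(I)$ and $J=\bigcap_j\q_j$ is the primary decomposition, $I$ coincides with $\bigcap_{j\in S}\q_j$ where $S$ is the set of indices $j$ for which $\m_j\supseteq I$; then $J:I$ equals the intersection of the remaining $\q_j$ and parts (a)--(c) apply directly. In the ideal-of-points setting that motivates the paper, this is transparent because every primary component of both $I$ and $J$ is a maximal ideal of a point in $V(J)$, and the inclusion $V(I)\subseteq V(J)$ forces the partition. In the general non-radical case I would verify it by exploiting the product decomposition $P/J\cong\prod_j P/\q_j$ coming from (b) applied to the finest partition, and using the fact that ideals of $P/J$ correspond to products of ideals of the local factors $P/\q_j$, so that $I/J$ must match $\q_j$ or the full ring in each factor.
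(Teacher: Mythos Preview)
Your treatment of (a), (b), (c) is correct and is essentially what the paper does: it disposes of (a) as ``standard facts in commutative algebra'', derives (b) from (a) via the Chinese Remainder Theorem, and reads (c) off from (b). Your explicit comaximality bootstrap and the colon-ideal computation are exactly the standard arguments being alluded to.

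The gap is in (d). Your final step asserts that in the decomposition $P/J\cong\prod_j P/\q_j$ the image of $I$ in each factor ``must match $\q_j$ or the full ring''. That is false: the factors $P/\q_j$ are local \emph{Artinian} rings, not fields, and they typically have many ideals strictly between $0$ and the whole ring. Concretely, take $P=K[x,y]$ and $I=\langle x^2,xy,y^2\rangle$. Then $\maxgrid(I)=\langle x^2,y^2\rangle=:J$ is already primary (so there is a single factor), $J:I=\langle x,y\rangle$, and one checks $I+(J:I)=\langle x,y\rangle\ne\langle 1\rangle$ and $I\cap(J:I)=I\ne J$. Thus the conclusions of (a) fail for this choice, and (d) as stated does not hold for arbitrary zero-dimensional $I$.

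Your instinct that the radical case is the transparent one is exactly right: if $I$ is radical then $\maxgrid(I)$ is a radical grid ideal, its primary components are maximal, and $I$ is forced to be an intersection of a subset of them, so (a)--(c) go through. The paper's own justification of (d) (``a consequence of the fact that $\maxgrid(I)$ is a grid ideal which contains $I$'') does not supply the missing argument either; the statement needs an additional hypothesis such as radicality of $I$, which is satisfied in the ideal-of-points setting the paper is ultimately concerned with.
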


\begin{proof}
Claim (a) can be proved using standard facts in commutative algebra.
Claim (b) follows from (a) and the Chinese Remainder Theorem
(see for instance~\cite{KR1}, Lemma 3.7.4). Claim (c) follows from~(b) since
the residue classes of the elements of $B_\sigma(I)$
form a $K$-basis of $P/I$ for any term ordering $\sigma$
and any ideal $I$ in $P$. Finally, Claim (d) is a consequence of the fact that
$\maxgrid(I)$ is a grid ideal which contains $I$.
\end{proof}

%
%
The following result is one of the main contributions of this paper.

\begin{theorem}\label{sameGFan}
Let $J$ be a grid ideal in $P$,  let $I_1$, $I_2$ be complementary ideals
with respect to $J$, and let $\sigma$ be a term ordering.

\begin{enumerate}
\item[(a)]  For $i=1,2$ we have 
$\mathcal{O}(J) \cap  \LT_\sigma(I_i)  = \mathcal{O}(J){\setminus}\mathcal{O}_\sigma(I_i)$.

\item[(b)] Let $\alpha_\sigma: \mathcal{O}(J) \cap  \LT_\sigma(I_i) \to  \mathbb T^n$ 
be the map which sends $t\mapsto t_{soc}(J)/t$. Then~$\alpha_\sigma$ 
is injective and induces a map 
$\vartheta_\sigma: \mathcal{O}(J){\setminus}\mathcal{O}_\sigma(I_1) 
\to \mathcal{O}_\sigma(I_2)$ which is bijective.
%

\item[(c)] We have ${\rm GFan}(I_1) = {\rm GFan}(I_2)$ 
and hence $\GFN(I_1) = \GFN(I_2)$.
\end{enumerate}
\end{theorem}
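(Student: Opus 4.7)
My plan is to treat the three parts in turn, with (b) doing the real work and (a) and (c) following quickly from it.

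For part (a), I would observe that the containment $J \subseteq I_i$ forces $\LT_\sigma(J) \subseteq \LT_\sigma(I_i)$, so that $\mathcal{O}_\sigma(I_i) \subseteq \mathcal{O}(J)$. Intersecting the disjoint partition $\mathbb{T}^n = \mathcal{O}_\sigma(I_i) \sqcup \LT_\sigma(I_i)$ with $\mathcal{O}(J)$ then gives $\mathcal{O}(J) = \mathcal{O}_\sigma(I_i) \sqcup \bigl(\mathcal{O}(J) \cap \LT_\sigma(I_i)\bigr)$, which is precisely the claim.

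For part (b), Lemma~\ref{uniquebasis}(d) identifies $\mathcal{O}(J)$ with the set of divisors of $t_{soc}(J)$, so the rule $\iota\colon t \mapsto t_{soc}(J)/t$ is a well-defined involution of $\mathcal{O}(J)$, and in particular its restriction $\alpha_\sigma$ is injective. The crux is to show that the image of $\alpha_\sigma$ lies in $\mathcal{O}_\sigma(I_2)$. I will argue by contradiction: if $t \in \mathcal{O}(J) \cap \LT_\sigma(I_1)$ and $t' := t_{soc}(J)/t$ were in $\LT_\sigma(I_2)$, I would pick $f \in I_1$ and $g \in I_2$ with $\LT_\sigma(f) = t$ and $\LT_\sigma(g) = t'$. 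Since $I_1$ and $I_2$ are comaximal by Lemma~\ref{IandJ}(a), one has $fg \in I_1 I_2 = I_1 \cap I_2 = J$. However, $\LT_\sigma(fg) = \LT_\sigma(f)\LT_\sigma(g) = t t' = t_{soc}(J)$, which lies in $\mathcal{O}(J)$ and is therefore not in $\LT_\sigma(J)$ by Lemma~\ref{uniquebasis}(d); this contradicts $fg \in J$. Once this inclusion is in hand, the cardinality equation from Lemma~\ref{IandJ}(c) yields $\card\bigl(\mathcal{O}(J){\setminus}\mathcal{O}_\sigma(I_1)\bigr) = \card(\mathcal{O}_\sigma(I_2))$, and combined with injectivity this forces $\vartheta_\sigma$ to be a bijection.

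For part (c), I will deduce the equality of fans from (b). Because the involution $\iota$ does not depend on $\sigma$, the formula $\mathcal{O}_\sigma(I_2) = \iota\bigl(\mathcal{O}(J) \setminus \mathcal{O}_\sigma(I_1)\bigr)$ says that $\mathcal{O}_\sigma(I_1)$ and $\mathcal{O}_\sigma(I_2)$ determine each other by a fixed rule. Consequently two weights induce the same basic set on $I_1$ exactly when they induce the same basic set on $I_2$. Since each maximal cone of a Gr\"obner fan is the equivalence class of weights producing a fixed basic set, the cones of ${\rm GFan}(I_1)$ and ${\rm GFan}(I_2)$ coincide, yielding ${\rm GFan}(I_1) = {\rm GFan}(I_2)$ and in particular $\GFN(I_1) = \GFN(I_2)$.

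The hardest step will be the key observation in (b): that although $fg$ lies in $J$, the product of the two leading terms is exactly the socle monomial $t_{soc}(J)$, which sits inside $\mathcal{O}(J)$ and is therefore forbidden from appearing as a leading term of any element of $J$. The rest of the argument is a cardinality count together with an appeal to the definition of the Gr\"obner fan via basic sets.
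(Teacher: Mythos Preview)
Your proposal is correct and follows essentially the same route as the paper. Part~(a) is handled identically. In part~(b), the paper phrases the contradiction at the level of ideals, arguing that $t_{soc}(J)=t\cdot(t_{soc}(J)/t)\in \LT_\sigma(I_1)\cdot\LT_\sigma(I_2)\subseteq\LT_\sigma(I_1I_2)\subseteq\LT_\sigma(J)$, whereas you pick explicit witnesses $f,g$ with the required leading terms and use $fg\in I_1\cap I_2=J$; these are the same idea, and your invocation of comaximality is harmless though unnecessary (the containment $fg\in I_1\cap I_2$ already suffices). Part~(c) is argued the same way in both, with your version spelling out slightly more explicitly that the $\sigma$-independent involution $\iota$ makes the correspondence between basic sets of $I_1$ and $I_2$ go in both directions.
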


\begin{proof}
First, we prove Claim (a).  From $I_i\supseteq J$, we deduce
that $\LT(I_i) \supseteq \LT(J)$,
hence $\mathcal{O}_\sigma(I_i) \subseteq \mathcal{O}(J)$ which implies the claim.

Next we prove Claim (b). The fact that $\alpha$ is injective is by construction. 
By contradiction, assume that  $t_{soc}(J)/t \notin \mathcal{O}_\sigma(I_2)$.
Then $t_{soc}(J)/t \in \LT_\sigma(I_2)$.
We have
$t_{soc}(J) = t\cdot t_{soc}(J)/t \in \LT_\sigma(I_1)\cdot \LT_\sigma(I_2)
\subseteq \LT_\sigma(I_1\cdot I_2)$.
Clearly $I_1\cdot I_2 \subseteq J$, and so we
get $t_{soc}(J)\in \LT_\sigma(I)$ which
yields a contradiction by Lemma~\ref{uniquebasis}(b).
Consequently we get an injective map 
$\mathcal{O}(J) \cap  \LT_\sigma(I_1) \to \mathcal{O}_\sigma(I_2)$,
and from~(a) we can rewrite it as a map 
$\vartheta: \mathcal{O}(J){\setminus} \mathcal{O}_\sigma(I_1) \to \mathcal{O}_\sigma(I_2)$
which is injective. Lemma~\ref{IandJ}(c) shows that the two sets
have the same cardinality, hence we conclude that  $\vartheta$ is bijective.

%
We observe that 
the set $\mathcal{O}_\sigma(I_1))$ is the same for every
$\sigma$ which corresponds to a point inside a polyhedral cone of ${\rm GFan}(I_1)$.
Therefore also the set $\mathcal{O}(J){\setminus}\mathcal{O}_\sigma(I_1))$ is the same,
hence we deduce from Claim (b) that also the set $\mathcal{O}_\sigma(I_2))$ is the same, and the proof is complete.
\end{proof}


\begin{remark}\label{prodequalintersect}
In the proof of Claim (b), we used the fact that $I_1\cdot I_2 \subseteq J$.
Actually we have $I_1\cdot I_2 = J$ (see for instance~\cite{KR3}, Theorem 2.2.1).
\end{remark}

Let us illustrate the theorem with an example.

\begin{example}\label{ex:nonradical.cocoa5}
Let $P=\mathbb Q[x,y]$.  Consider the grid ideal 
$$J=\langle x(x^2+1)^2(x-1), (y^3-1)(y+2) \rangle$$ with primary decomposition 
$$\langle x,y +2\rangle \cap 
\langle x,y -1\rangle \cap 
\langle x, y^2 +y +1\rangle \cap 
\langle x-1,y +2\rangle \cap 
\langle x-1,y -1\rangle \cap 
\langle x -1, y^2 +y +1\rangle$$ 
$$ \cap \langle x^4 +2x^2 +1,y+2\rangle \cap
\langle x^4 +2x^2 +1,y-1\rangle \cap 
\langle x^4 +2x^2 +1,y^2 +y +1\rangle. 
$$
Let $I_1$ and $I_2$ are complementary with respect to $J$.  If 
$$I_1= 
\langle x,y +2\rangle \cap 
\langle x -1, y^2 +y +1\rangle \cap 
\langle x^4 +2x^2 +1,y+2\rangle,$$ 
then 
$$I_2=J:I_1=
\langle x, y -1\rangle \cap 
\langle x, y^2 +y +1\rangle \cap 
\langle x-1,y +2\rangle \cap 
\langle x-1, y -1\rangle \cap $$ 
$$\langle x^4 +2x^2 +1,y-1\rangle \cap 
\langle x^4 +2x^2 +1,y^2 +y +1\rangle. 
$$
We have $\GFN(I_1)=\GFN(I_2)=~2$.  See the following \cocoa-5  code for details.
\scriptsize{
\begin{verbatim}
K::=QQ; Use P::=K[x,y];
J:=ideal(x*(x^2+1)^2*(x-1), (y^3-1)*(y+2));
PD:=PrimaryDecomposition0(J);[ReducedGBasis(X) | X In PD];
/*
[[y +2, x], [y -1, x], [x, y^2 +y +1], [y +2, x -1],
[y -1, x -1], [x -1, y^2 +y +1], [y +2, x^4 +2*x^2 +1],
[y -1, x^4 +2*x^2 +1], [y^2 +y +1, x^4 +2*x^2 +1]]
*/
I1:= Intersection(ideal(x -1, y^2 +y +1), ideal(y +2, x),
ideal(y +2, x^4 +2*x^2 +1));
ReducedGBasis(I1); QB1:=QuotientBasisSorted(I1); QB1;
-- [x*y +2*x -y -2, y^3 +3*y^2 +3*y +2,
--  x^5 +2*x^3 +(4/3)*y^2 +x +(4/3)*y -8/3]
-- [1, y, x, y^2, x^2, x^3, x^4]
I2:=Intersection(ideal(x, y^2 +y +1), ideal(y -1, x -1),
    ideal(y -1, x), ideal(y +2, x -1), ideal(y -1, x^4 +2*x^2 +1),
    ideal(y^2 +y +1, x^4 +2*x^2 +1));
indent(ReducedGBasis(I2)); QB2:=QuotientBasisSorted(I2); QB2;
/*[
  y^4 +2*y^3 -y -2,
  x*y^3 -y^3 -x +1,
  x^5*y -x^5 +2*x^3*y -2*x^3 +(-4/3)*y^3 +x*y -x +4/3,
  x^6 -x^5 +2*x^4 -2*x^3 +x^2 -x
] */
-- [1, y, x, y^2, x*y, x^2, y^3, x*y^2, x^2*y, x^3, x^2*y^2,
--   x^3*y, x^4, x^3*y^2, x^4*y, x^5, x^4*y^2]
multiplicity(P/J); multiplicity(P/I1); multiplicity(P/I2);
-- 24
-- 7
-- 17
GF1:=GroebnerFanReducedGBases(I1);indent(GF1);
/*
  [x*y +2*x -y -2, y^3 +3*y^2 +3*y +2, x^5 +2*x^3 +(4/3)*y^2 +x +(4/3)*y -8/3],
  [x*y -y +2*x -2, y^2 +(3/4)*x^5 +(3/2)*x^3 +y +(3/4)*x -2,
          x^6 -x^5 +2*x^4 -2*x^3 +x^2 -x]
*/
GF2:=GroebnerFanReducedGBases(I2); indent(GF2);
/*
  [y^4 +2*y^3 -y -2, x*y^3 -y^3 -x +1,
        x^5*y -x^5 +2*x^3*y -2*x^3 +(-4/3)*y^3 +x*y -x +4/3,
        x^6 -x^5 +2*x^4 -2*x^3 +x^2 -x],
  [y^3 +(-3/4)*x^5*y +(-3/2)*x^3*y +(3/4)*x^5 +(-3/4)*x*y +(3/2)*x^3 +(3/4)*x -1,
       x^5*y^2 +2*x^3*y^2 +x^5*y +x*y^2 +2*x^3*y -2*x^5 +x*y -4*x^3 -2*x,
       x^6 -x^5 +2*x^4 -2*x^3 +x^2 -x]
*/\end{verbatim}
}
\end{example}

\begin{corollary}\label{IandJ2}
Let $J$ be a radical grid ideal in $P$ and let $I$ be an ideal in~$P$ such
that $I\supseteq J$.

\begin{enumerate}
\item[(a)]  The ideals $I$ and $J:I$ are radical.

\item[(b)] The ideals $I$ and $J:I$ are complementary ideals with respect to $J$, and
hence the conclusions of
Theorem~\ref{sameGFan} apply to $I_1=I$ and $I_2=J:I$.

\item[(c)] Let  $\mathbb X$ be a grid of points,  $J= \mathcal I(\mathbb X)$, and $I\supseteq J$. 
Then there exists a subset $\mathbb Y$ of $\mathbb X$
such that $I= \mathcal I(\mathbb Y)$, $J:I= \mathcal I(\mathbb X\setminus \mathbb Y)$,
and (a) and (b) are satisfied by the ideals $J$, $I$ and $J:I$.
\end{enumerate}
\end{corollary}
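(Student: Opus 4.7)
The plan is to use the fact that a radical grid ideal is zero-dimensional with primary decomposition consisting of maximal ideals, so that Chinese Remainder collapses $P/J$ to a finite product of fields; then every ideal $I\supseteq J$ is determined by choosing a subset of the associated maximal ideals.

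First I would handle (a). Since $J$ is a radical grid ideal, Lemma~\ref{uniquebasis}(a) gives that $J$ is zero-dimensional, and being radical its primary decomposition has the form $J=\m_1\cap\cdots\cap \m_s$ with the $\m_k$ distinct maximal ideals. By the Chinese Remainder Theorem (as in \cite{KR1}, Lemma 3.7.4), $P/J\cong \prod_{k=1}^s P/\m_k$ is a finite product of fields. Ideals of a finite product of fields are in bijection with subsets of the indexing set, so every ideal $\bar I$ of $P/J$ has the form $\prod_{k\in S}\{0\}\times \prod_{k\notin S} P/\m_k$ for some $S\subseteq\{1,\dots,s\}$. Lifting back, an ideal $I\supseteq J$ necessarily equals $\bigcap_{k\in S}\m_k$, which is radical. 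The same reasoning applied to the complementary subset (or the alternative characterization of the colon ideal below) shows that $J:I$ is radical as well.

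For (b), using the same notation, write $I=\bigcap_{k\in S}\m_k$ with $S\subseteq\{1,\dots,s\}$. Standard commutative algebra gives $J:I=\bigcap_{k\notin S}\m_k$: indeed, $J:(\bigcap_{k\in S}\m_k)=\bigcap_{k\in S}(J:\m_k)$, and for each $k\in S$, $J:\m_k=\bigcap_{k'\ne k}\m_{k'}$ since the $\m_{k'}$ are pairwise comaximal. Intersecting over $k\in S$ leaves exactly the factors indexed by $k\notin S$. After reindexing, $I$ and $J:I$ arise from the partition $\{1,\dots,s\}=S\sqcup S^c$ of the primary components of $J$, and are therefore complementary ideals with respect to $J$ in the sense of Definition~\ref{mainAssump2}. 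The conclusions of Theorem~\ref{sameGFan} then apply directly.

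For (c), suppose $\mathbb X$ is a grid of points and $J=\mathcal I(\mathbb X)$, so $J$ is a radical grid ideal with $J=\bigcap_{p\in\mathbb X}\m_p$ where $\m_p$ is the maximal ideal of the point $p$. By~(a), any $I\supseteq J$ is radical; by the classification obtained in (a), $I=\bigcap_{p\in\mathbb Y}\m_p=\mathcal I(\mathbb Y)$ for a uniquely determined subset $\mathbb Y\subseteq\mathbb X$. The formula for the colon computed in (b) immediately translates to $J:I=\bigcap_{p\in\mathbb X\setminus\mathbb Y}\m_p=\mathcal I(\mathbb X\setminus\mathbb Y)$, and then (a) and (b) are already established for the triple $(J,I,J:I)$.

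The main obstacle is really the radical-zero-dimensional structural step in~(a): once one has the Chinese Remainder decomposition and the easy fact that ideals of a finite product of fields are parametrized by subsets of components, everything else is bookkeeping with primary decompositions and the pairwise comaximality of the $\m_k$.
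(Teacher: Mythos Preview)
Your overall strategy matches the paper's: both arguments use the Chinese Remainder decomposition $P/J\cong\prod_k P/\m_k$ to identify ideals $I\supseteq J$ with subsets of the index set, deduce radicality from $I=\bigcap_{k\in S}\m_k$, and conclude $J:I=\bigcap_{k\notin S}\m_k$, which gives complementarity. Part~(c) is then the straightforward specialization to the case where each $\m_k$ is the maximal ideal of a point of~$\mathbb X$.

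There is, however, a genuine error in your justification of the colon formula in~(b). You write $J:(\bigcap_{k\in S}\m_k)=\bigcap_{k\in S}(J:\m_k)$, but this identity is false: the colon distributes over intersections on the \emph{left} (that is, $(\bigcap_\alpha J_\alpha):I=\bigcap_\alpha(J_\alpha:I)$) and over sums on the right, but not over intersections on the right. Concretely, take $J=(30)$, $\m_1=(2)$, $\m_2=(3)$, $\m_3=(5)$ in $\mathbb Z$ with $S=\{1,2\}$: then $J:(\m_1\cap\m_2)=(30):(6)=(5)$, whereas $(J:\m_1)\cap(J:\m_2)=(15)\cap(10)=(30)$. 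Your follow-up claim is then also wrong: intersecting $\bigcap_{k'\ne k}\m_{k'}$ over all $k\in S$ (with $|S|\ge 2$) gives $J$, not $\bigcap_{k\notin S}\m_k$. The repair is immediate and is essentially what the paper does: either read the colon off directly from the CRT isomorphism you already set up in~(a), or distribute on the correct side, writing $J:I=(\bigcap_{k=1}^s\m_k):I=\bigcap_{k=1}^s(\m_k:I)$ and using that $\m_k:I=P$ when $k\in S$ and $\m_k:I=\m_k$ when $k\notin S$ (since $\m_k$ is prime and $I\not\subseteq\m_k$).
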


\begin{proof}
To prove Claim (a), notice that as the ideal $J$ is zero-dimensional and radical,  
there are maximal ideals $\m_1, \dots, \m_s$ in $P$ such that $J = \bigcap_{i=1}^s \m_i$.
Let $S =\{1, \dots, s\}$.  
The Chinese Remainder Theorem implies that there is an 
isomorphism $\varphi:P/J\cong \prod_{i\in S}P/\m_i$. Via this isomorphism, 
the image $\varphi(I)$ is a product of~$s$ ideals which are 
either $\langle 0\rangle$ or~$\langle 1 \rangle$.
Let $T\subseteq S$ be the subset of
indices which correspond to the zero ideals. Then $I = \bigcap_{i \in T}\m_i$ and 
hence it is radical. Similarly we find that $J:I$.

To prove Claim (b), it suffices to observe that we have
$J:I = \bigcap_{i \in S{\setminus}T}\m_i$.

Finally, to prove (c), let $\mathbb X$ be a grid of points in $K^n$, and for
$i=1, \dots, n$,   let $g_i = \prod_{j=1}^{d_i}(x_i-c_{ij})$. Then
the vanishing ideal of~$\mathbb X$ is~$\mathcal I(\mathbb X)=\langle g_1, \dots, g_n\rangle$.
The ideal $\mathcal I(\mathbb X)$ is  radical by construction, and every ideal
which contains $\mathcal I(\mathbb X)$ is the
vanishing ideal of a subset $\mathbb Y$ of $\mathbb X$,
\textit{i.e.}\ it is of type $\mathcal{I}(\mathbb {Y})$. Consequently
we have $\mathcal I(\mathbb X):\mathcal I(\mathbb {Y}) =
\mathcal{I}(\mathbb X{\setminus} \mathbb Y)$.
\end{proof}

\smallskip
The following example illustrates this corollary.

\begin{example}\label{radical}
\scriptsize{
\begin{verbatim}
K::=QQ; Use P::=K[x,y];
I:=ideal((x^2+1)*(x-1)*(x-2), (y^2-2)*(y+2));
J1:= I+ideal(x-1+y^2-2);
J2:=Colon(I,J1);

ReducedGBasis(J1); QB1:=QuotientBasis(J1); QB1;
-- [x -1, y^2 -2]
-- [1, y]
ReducedGBasis(J2); QB2:=QuotientBasis(J2); QB2;
-- [y^3 +2*y^2 -2*y -4,
     x^3*y +2*x^3 -2*x^2*y -4*x^2 +x*y +2*x -2*y -4,
     x^4 -3*x^3 +3*x^2 -3*x +2]
-- [1, y, y^2, x, x*y, x*y^2, x^2, x^2*y, x^2*y^2, x^3]
multiplicity(P/I); multiplicity(P/J1); multiplicity(P/J2);
-- 12
-- 2
-- 10
GF1:=GroebnerFanReducedGBases(J1);GF1;
-- [y^2 -2, x -1]
GF2:=GroebnerFanReducedGBases(J2);GF2;
-- [y^3 +2*y^2 -2*y -4,
--  x^3*y +2*x^3 -2*x^2*y -4*x^2 +x*y +2*x -2*y -4,
--  x^4 -3*x^3 +3*x^2 -3*x +2]
\end{verbatim}
}
\end{example}

%
\medskip
\section{\large Final Remarks}
\label{Application}

In this section we collect some consequences of the theoretical results
described in the preceding sections.
The significance of Theorem \ref{sameGFan} is the ability to 
quickly compute new ideals with $\GFN$ 1.  
This is especially convenient for network inference or design of 
experiments where data often have states in a finite field. 
Let $K$ be a finite field with characteristic $p>0$. Then $K$ is a finite-dimensional $\mathbb F_p$-vector space  hence
the number of its elements is $q=p^e$, where $e= \dim_{\,\mathbb F_p}(K)$.
Given an indeterminate $z$, the univariate polynomial  $z^q-z$ is called a \textbf{field equation}
of $K$ since  $z^q-z = \prod_{a \in K}(z-a)$ (see~\cite{J}, Section 4.13).
Consequently, if $P^{\mathstrut} = K[x_1, \dots, x_n]$ and $g_i = x_i^q-x_i$ for $i =1, \dots, n$, then the
ideal $\langle g_1, \dots, g_n\rangle$ is the vanishing ideal of a grid and hence
Corollary~\ref{IandJ2} applies to this case.

Let us see an example with $K = \mathbb F_3$.

\begin{example}\label{fieldeq}
\scriptsize{
\begin{verbatim}
K::=ZZ/(3); Use P::= K[x,y,z];
I:=ideal(x^3-x, y^3-y, z^3-z);
J1:= I+ideal(x^2-y-z);
J2:=Colon(I, J1);

ReducedGBasis(J1); QB1:=QuotientBasis(J1); QB1;
-- [y^2 -y*z +z^2 -y -z, x*y +x*z -x, x^2 -y -z, z^3 -z]
-- [1, z, z^2, y, y*z, y*z^2, x, x*z, x*z^2]

ReducedGBasis(J2); QB2:=QuotientBasis(J2); QB2;
-- [z^3 -z, y^3 -y, x*y^2 -x*y*z +x*z^2 +x*y +x*z,
         x^2*y +x^2*z +x^2 +y^2 -y*z +z^2 -1, x^3 -x]
-- [1, z, z^2, y, y*z, y*z^2, y^2, y^2*z, y^2*z^2, x, x*z,
        x*z^2, x*y, x*y*z, x*y*z^2, x^2, x^2*z, x^2*z^2]
multiplicity(P/I); multiplicity(P/J1); multiplicity(P/J2);
-- 27
--  9
-- 18
GF1:=GroebnerFanIdeals(J1);
GF2:=GroebnerFanIdeals(J2);
Len(GF1);Len(GF2);
-- 4
-- 4
GF1:=GroebnerFanReducedGBases(J1);indent(GF1);
/*
[
  [x^2 -y -z, z^3 -z, x*y +x*z -x, y^2 -y*z +z^2 -y -z],
  [x^2 -z -y, y^3 -y, x*z +x*y -x, z^2 -y*z +y^2 -z -y],
  [y -x^2 +z, x^3 -x, z^3 -z],
  [z +y -x^2, x^3 -x, y^3 -y]
]
*/
GF2:=(J2);indent(GF2);
/*
  [z^3 -z, y^3 -y, x*y^2 -x*y*z +x*z^2 +x*y +x*z,
     x^2*y +x^2*z +x^2 +y^2 -y*z +z^2 -1, x^3 -x],
  [y^3 -y, x^3 -x, x^2*z +x^2*y +z^2 +x^2 -y*z +y^2 -1,
     x*z^2 -x*y*z +x*y^2 +x*z +x*y, z^3 -z],
  [x^3 -x, z^3 -z, y^2 +x^2*y -y*z +x^2*z +z^2 +x^2 -1],
  [x^3 -x, z^2 -y*z +y^2 +x^2*z +x^2*y +x^2 -1, y^3 -y]
*/
\end{verbatim}
}
\end{example}

Theorem \ref{sameGFan} shows, among other results, that complementary
ideals have the same number of reduced Gr\"obner bases.
The advantage of this is that it may be computationally easy
to test whether a \emph{small} set of data has a unique
Gr\"obner basis associated to it and then to generate
a \emph{larger} set via the complement. Let us see an easy
application of this remark.

\begin{proposition}\label{gridminusgrid}
Let $J, I_1, I_2$ be deals in $P$ such that
$J$ and $I_1$ are grid ideals, $J\subset I_1$, and $I_2 = J:I_1$,
\begin{enumerate}
\item[(a)]  We have ${\GFN}(I_2) = 1$.
\item[(b)]  In particular,  statement (a) holds if
$\mathbb{X}, \mathbb{Y}$ are grid of points, $J= \mathcal{I}(\mathbb{X})$,
$I_1= \mathcal{I}(\mathbb{Y})$, and hence
$I_2= \mathcal{I}(\mathbb{X}\setminus \mathbb{Y})$.
\end{enumerate}
\end{proposition}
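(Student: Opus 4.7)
The plan is to compute $I_2$ and its leading term ideal explicitly. Write $J=\langle g_1,\dots,g_n\rangle$ and $I_1=\langle h_1,\dots,h_n\rangle$ with $d_i=\deg g_i$. Since $g_i\in I_1\cap K[x_i]=\langle h_i\rangle$, we have $h_i\mid g_i$; set $k_i=g_i/h_i$ and $e_i=\deg k_i$. First I would observe that $k_1k_2\cdots k_n\in I_2=J:I_1$, which follows from $(k_1\cdots k_n)\cdot h_i=(k_1\cdots\widehat{k_i}\cdots k_n)\cdot g_i\in J$ for every $i$.

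Next I would do a dimension count using the Chinese Remainder isomorphism $P/J\cong A_1\otimes_K\cdots\otimes_K A_n$, where $A_i=K[x_i]/\langle g_i\rangle$. Under this, $I_2/J$ is the annihilator of $I_1/J$, equal to $\bigcap_i \Ann_{P/J}(\bar h_i)$. Since $\Ann_{A_i}(\bar h_i)=\bar k_i A_i$ has $K$-dimension $\deg h_i$, and since for $K$-subspaces $W_i\subseteq V_i$ one has $\bigcap_i(V_1\otimes\cdots\otimes W_i\otimes\cdots\otimes V_n)=W_1\otimes\cdots\otimes W_n$, the dimension of $I_2/J$ equals $\prod_i\deg h_i$, so $\dim_K P/I_2=\prod_i d_i-\prod_i\deg h_i$.

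Finally I would identify $\LT_\sigma(I_2)$ for every term ordering $\sigma$. Because $g_i$ and $k_1\cdots k_n$ have $\sigma$-independent leading monomials $x_i^{d_i}$ and $x_1^{e_1}\cdots x_n^{e_n}$, the monomial ideal $M=\langle x_1^{d_1},\dots,x_n^{d_n},\,x_1^{e_1}\cdots x_n^{e_n}\rangle$ sits inside $\LT_\sigma(I_2)$. A direct count gives $|\mathbb T^n\setminus M|=\prod_i d_i-\prod_i(d_i-e_i)=\prod_i d_i-\prod_i\deg h_i=\dim_K P/I_2=|\mathbb T^n\setminus\LT_\sigma(I_2)|$ by Macaulay's basis theorem. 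Together with $M\subseteq\LT_\sigma(I_2)$ this forces $\LT_\sigma(I_2)=M$ for every $\sigma$, hence $\GFN(I_2)=1$, proving (a). Part~(b) is then immediate: for grids of points $J=\mathcal I(\mathbb X)$ is radical, $J\subset I_1$ is equivalent to $\mathbb Y\subseteq\mathbb X$, and $\mathcal I(\mathbb X):\mathcal I(\mathbb Y)=\mathcal I(\mathbb X\setminus\mathbb Y)$ by Corollary~\ref{IandJ2}(c).

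The main obstacle is the tensor-product intersection formula used in the dimension count; it is a standard vector-space identity verified by choosing bases of each $V_i$ extending bases of $W_i$, but in this context it replaces a potentially cumbersome direct colon-ideal computation. An alternative route would be to verify by Buchberger's criterion that $\{g_1,\dots,g_n,k_1\cdots k_n\}$ is a $\sigma$-Gr\"obner basis for every $\sigma$, but the dimension argument avoids case analysis (e.g.\ when some $e_i=0$) and works uniformly for both radical and non-radical grid ideals.
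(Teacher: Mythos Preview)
Your argument is correct and takes a genuinely different route from the paper. The paper's proof is a two-line application of its main machinery: since $I_1$ is a grid ideal, $\GFN(I_1)=1$ by Lemma~\ref{uniquebasis}(b), and then Theorem~\ref{sameGFan}(c) transfers this to $I_2$ via complementarity. You instead compute everything explicitly: you exhibit the element $k_1\cdots k_n\in I_2$, establish the dimension of $P/I_2$ by a tensor-product annihilator calculation, and match it against the size of the complement of the candidate monomial ideal $M$ to conclude that $\{g_1,\dots,g_n,k_1\cdots k_n\}$ is a universal Gr\"obner basis of $I_2$.

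What each approach buys: the paper's route is short and showcases Theorem~\ref{sameGFan}, which is the point of the section. However, that theorem is stated for \emph{complementary} ideals in the sense of Definition~\ref{mainAssump2}, i.e.\ a partition of the primary components of $J$; when $J$ is not radical (say $g_1=x_1^4$ and $h_1=x_1^2$) the grid ideal $I_1$ need not be such an intersection, so strictly speaking only the ingredients of the proof of Theorem~\ref{sameGFan} survive ($I_1I_2\subseteq J$ and the dimension identity), and the latter is precisely what your tensor computation supplies. Your argument therefore works uniformly in the radical and non-radical cases without appealing to complementarity, and it yields the explicit leading term ideal $\LT_\sigma(I_2)=\langle x_1^{d_1},\dots,x_n^{d_n},\,x_1^{e_1}\cdots x_n^{e_n}\rangle$, information the paper's proof does not provide.
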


\begin{proof}
As Claim (b) is a special case of  (a), let us prove Claim (a).
Since $I_2$ is a grid ideal, we get $\GFN(I_2) = 1$
from Lemma~\ref{uniquebasis}(b),
and the conclusion follows from Theorem~\ref{sameGFan}(d).
\end{proof}

%

Let us see an example which illustrates this proposition.

\begin{example}\label{ex-gridminusgrid}
\scriptsize{
\begin{verbatim}
Use P::= QQ[x,y];
F:=x*(x-1)*(x-2)*(x-3)*(x-4);
G:=y*(y-1)*(y-2)*(y-3);
I:=ideal(F,G);
---------------------------------------------------
M:=mat([[0,1], [0,3], [1,1], [1,3], [3,1], [3,3]]);
J1:=IdealOfPoints(P,M);
J2:=Colon(I,J1);
GF:=GroebnerFanIdeals(J2);GF;
-- [ideal(x^2*y^2 -2*x^2*y -6*x*y^2 +12*x*y +8*y^2 -16*y,
--    y^4 -6*y^3 +11*y^2 -6*y, x^5 -10*x^4 +35*x^3 -50*x^2 +24*x)]
Len(GF);
-- 1
-- The ideal J1 is the vanishing ideal of the "white dots".
-- The ideal J2 is the vanishing ideal of the "black dots".
\end{verbatim}
}
\makebox[11 true cm]{

\beginpicture
\setcoordinatesystem units <0.4cm,0.4cm>
\setplotarea x from 0 to 5.5, y from 0 to 4.5
\axis left /
\axis bottom /

\arrow <2mm> [.2,.67] from  5.5 0  to 6 0
\arrow <2mm> [.2,.67] from  0 4.5 to 0 5

\put {$\scriptstyle x$} [lt] <0.5mm,0.8mm> at 6.2 0
\put {$\scriptstyle y$} [rb] <1.7mm,0.7mm> at 0 5.2
\put {$\bullet$} at 0 0
\put {$\bullet$} at 1 0
\put {$\bullet$} at 2 0
\put {$\bullet$} at 3 0
\put {$\bullet$} at 4 0
\put {$\circ$} at 0 1
\put {$\circ$} at 1 1
\put {$\bullet$} at 2 1
\put {$\circ$}  at 3 1
\put {$\bullet$} at 4 1
\put {$\bullet$} at 0 2
\put {$\bullet$} at 1 2
\put {$\bullet$} at 2 2
\put {$\bullet$} at 3 2
\put {$\bullet$} at 4 2
\put {$\circ$} at 0 3
\put {$\circ$} at 1 3
\put {$\bullet$} at 2 3
\put {$\circ$} at  3 3
\put {$\bullet$} at 4 3


\endpicture}
\end{example}

%
%
%
%

\bigskip

One of the  main goals of this paper is to identify classes of ideals inside affine $K$-algebras which have a GFan number equal to 1. Using the notions of distractions of ideals and their linear shifts we were able to identify a large class of such ideals and provided a methodology for constructing them. Furthermore, we proved that complementary ideals have the same GFan number which provides a tool for identifying ideals of (large) sets of points as having a GFan number 1 based on the ideal of the (small) complementary set of points. Future work may involve a geometric characterization of all data sets with GFan number equal to~1.


\section*{\large Acknowledgements}
We thank Shuhong Gao and Sean Sather-Wagstaff for their insightful comments during fruitful discussions. We also thank Anyu Zhang for the helpful computations she performed. Dimitrova, He, and Stigler were partially supported by the National Science Foundation under Award DMS-1419023. Finally, we are pleased to thank the anonymous referee for many useful comments.


\bigskip

\centerline{\textbf{References}}

\end{document}